\documentclass[11pt]{amsart}


\usepackage{amsfonts,latexsym}
\usepackage{amsmath}
\usepackage{amstext}
\usepackage{amssymb}
\usepackage{graphicx,diagrams}
\graphicspath{{figures/}}




\newcommand{\R}{\mathbb{R}}

\newcommand{\Z}{\mathbb{Z}}
\newcommand{\N}{{\mathbb{N}}}



\topmargin  = 0.0 in
\leftmargin = 0.9 in
\rightmargin = 1.0 in
\evensidemargin = -0.10 in
\oddsidemargin =  0.10 in
\textheight = 8.5 in
\textwidth  = 6.6 in
\setlength{\parskip}{2mm}
\setlength{\parindent}{0mm}


\newtheorem{theorem}{Theorem}[section]

\theoremstyle{definition}
\newtheorem{definition}[theorem]{Definition}

\theoremstyle{remark}
\newtheorem{remark}[theorem]{Remark}

\numberwithin{equation}{section}


\begin{document}

\title[Approximately Bisimilar Symbolic Models for Incrementally Stable Switched Systems]
      {Approximately Bisimilar Symbolic Models\\ for Incrementally Stable Switched Systems}
\thanks{This work was partially supported by the ANR SETIN project VAL-AMS 
        and by the NSF CAREER award 0717188.}


\author[Antoine Girard]{Antoine Girard}
\address{Laboratoire Jean Kuntzmann \\
Universit\'e de Grenoble \\
B.P. 53, 38041 Grenoble, France} \email{antoine.girard@imag.fr}

\author[Giordano Pola]{Giordano Pola}
\address{Department of Electrical and Information Engineering \\
University of L'Aquila \\
Poggio di Roio, 67040 L'Aquila, Italy} \email{pola@ing.univaq.it}

\author[Paulo Tabuada]{Paulo Tabuada}
\address{Department of Electrical Engineering \\
University of California at Los Angeles \\
Los Angeles, CA 90095-1594 } \email{tabuada@ee.ucla.edu}


\maketitle


\begin{abstract}

Switched systems constitute an important modeling paradigm faithfully describing many engineering systems in which software interacts with the physical world. Despite considerable progress on stability and stabilization of switched systems, the constant evolution of technology demands that we make similar progress with respect to different, and perhaps more complex, objectives. This paper describes one particular approach to address these different objectives based on the construction of approximately equivalent (bisimilar) symbolic models for  switched systems.   
The main contribution of this paper consists in showing that under standard assumptions ensuring incremental stability of a switched system (i.e. existence of a common Lyapunov function, or multiple Lyapunov functions with dwell time), it is 
possible to construct a finite symbolic model that is approximately bisimilar to the original switched system with a precision
that can be chosen a priori.  To support the computational merits of the proposed approach, we use symbolic models to synthesize controllers for two examples of switched systems,
including the boost DC-DC converter.
\end{abstract}
\section{Introduction}
Switched systems constitute an important modeling paradigm faithfully describing many engineering systems in which software interacts with the physical world. Although this fact already amply justifies its study, switched systems are also quite intriguing from a theoretical point of view. It is well known that by judiciously switching between stable subsystems one can render the overall system unstable. This motivated several researchers over the years to understand which classes of switching strategies or switching signals preserve stability (see e.g.~\cite{liberzon2003}). 
Despite considerable progress on stability and stabilization of switched systems, the constant evolution of technology demands that we make similar progress with respect to different, and perhaps more complex, objectives. These comprise the synthesis of control strategies guiding the switched systems through predetermined operating points while avoiding certain regions in the state space, enforcing limit cycles and oscillatory behavior, reconfiguration upon the occurrence of faults, etc. 

This paper describes one particular approach to address these different objectives based on the construction of {\it symbolic models} that are abstract description of the switched dynamics and in which each  abstract state, or symbol, corresponds to an aggregate of 
states in the switched system. When the symbolic models are finite, controller synthesis problems can be efficiently solved by resorting to mature techniques developed in the areas of supervisory control of discrete-event systems~\cite{ramadge1987} and algorithmic game theory~\cite{arnold2003}. The crucial step is therefore the construction of symbolic models that are detailed enough to capture all the behavior of the original system, but not so detailed that their use for synthesis is as difficult as the original model. This is accomplished, at the technical level, by using the notion of approximate bisimulation.
Approximate bisimulation has been introduced in~\cite{girard2007}, as an approximate version of the usual bisimulation relation~\cite{milner1989,park1981}, and in~\cite{SymbolicControl06} by using set-valued observations. It generalizes the notion of bisimulation by requiring the outputs of two systems to be close instead of being strictly equal. This relaxed requirement makes it possible to compute symbolic models for larger classes of systems as shown recently for incrementally stable continuous control systems~\cite{pola2007}.

In this paper, we first extend the standard theorems on asymptotic stability of switched systems, 
i.e. results based on the existence a common Lyapunov function, or multiple Lyapunov functions with dwell time~\cite{liberzon2003}, to study incremental stability of switched systems.
The main contribution of the paper consists in showing that under the assumptions ensuring incremental stability of a switched system, it is 
possible to construct a symbolic model that is approximately bisimilar to the original switched system with a precision
that can be chosen a priori. The proof is constructive and it is straightforward to derive a  procedure for the computation of these symbolic models.
Since in problems of practical interest the state space can be assumed to be bounded, the resulting symbolic model is guaranteed to have finitely many states and can thus be used for algorithmic controller synthesis. 
To support the computational merits of the proposed approach, we show how to use symbolic models to synthesize controllers for two examples of switched systems. First, we consider 
the boost DC-DC converter, and show how to synthesize a switched controller that regulates the output voltage at a desired level. For this example, it is possible to find a common Lyapunov function, therefore, 
we consider a second example that illustrates the use of multiple Lyapunov functions with dwell time. 
A preliminary version of these results appeared in~\cite{girard2008}.

In the following, the symbols $\N$, $\Z$, $\R$, $\R^+$ and $\R^+_0$ denote the set of
natural, integer, real, positive and nonnegative real numbers
respectively. 
Given a vector $x\in \R^n$, we denote by $x_i$ its $i$-th
coordinate and by $\|x\|$ its Euclidean norm.

\section{Switched systems and incremental stability}

\subsection{Switched systems}

We shall consider the class of switched systems
formalized in the following definition.

\begin{definition}
\label{def:switch} A switched system is a quadruple
$
\Sigma=(\R^n,P,\mathcal P,F),
$
where:
\begin{itemize}
\item $\R^n$ is the state space;

\item $P =\{1,\dots,m\}$ is the finite set of modes;

\item $\mathcal P$ is a subset of $\mathcal{S}(\R^+_0,P)$ which denotes the set of piecewise constant functions from $\R^+_0$ to $P$, continuous from the right and with a finite number of discontinuities on every bounded interval of $\R^+_0$;

\item $F=\{f_{1},\dots,f_{m}\}$ is a collection of vector
fields indexed by $P$. For all $p \in P$, $f_p :
\R^n \rightarrow \R^n$ is a locally Lipschitz continuous map.
\end{itemize}
\end{definition}

For all $p \in P$, we denote by $\Sigma_p$ the continuous {\it subsystem} of $\Sigma$ defined by the differential equation:
\begin{equation}
\label{eq:edo}
\dot{\mathbf{x}}(t)=f_{p}(\mathbf{x}(t)).
\end{equation}
We make the assumption that the vector field $f_p$ is such that the solutions of the differential equation (\ref{eq:edo}) are defined on an interval of the form $]a,+\infty[$ with $a<0$. 
Necessary and sufficient conditions to be satisfied by $f_p$ can be found in~\cite{angeli1999}.
Simpler, but only sufficient, conditions include linear growth or compact support of the vector field $f_p$.

A {\it switching signal} of $\Sigma$ is a function $\mathbf{p}\in \mathcal P$, the discontinuities of $\mathbf{p}$
are called {\it switching times}.
A piecewise $\mathcal C^1$ function $\mathbf{x}:\R_0^+ \rightarrow \R^n$ is
said to be a {\it trajectory} of $\Sigma$ if it is continuous and there exists a switching
signal $\mathbf p\in \mathcal P$ such that, at each
$t\in \R_0^+$ where the function $\mathbf{p}$ is continuous, $\mathbf{x}$ is continuously differentiable and satisfies:
$$
\dot{\mathbf{x}} (t) = f_{\mathbf{p}(t)}(\mathbf{x}(t)).
$$
We will use $\mathbf{x}(t,x,\mathbf{p})$ to denote the point
reached at time $t\in \R_0^+$ from the initial condition $x$ under
the switching signal $\mathbf{p}$. 
The assumptions on the vector fields $f_{1},\dots,f_{m}$ ensure for all initial conditions and switching
signals, existence and uniqueness of the trajectory of $\Sigma$. Furthermore since switching signals have only a finite number of discontinuities on every bounded interval, Zeno behaviors are ruled out.
Let us remark that a trajectory of $\Sigma_p$ is a trajectory of $\Sigma$  associated with the constant switching signal $\mathbf{p}(t)=p$, for all $t\in \R_0^+$. Then, we will use $\mathbf{x}(t,x,{p})$ to denote the point reached by $\Sigma_p$ at time $t\in \R^+_0$ from the initial condition $x$.

\subsection{Incremental stability}

The results presented in this paper rely on some stability
notions. 
A continuous function $\gamma :\R_0^+ \rightarrow \R_0^+ $ is said
to belong to class $\mathcal K$ if it is strictly increasing and
$\gamma(0)=0$. Function $\gamma$ is said to belong to class $\mathcal
K_\infty$ if it is a $\mathcal K$ function and
$\gamma(r)\rightarrow \infty$ when $r \rightarrow \infty$. A
continuous function $\beta:\R^+_0 \times \R^+_0 \rightarrow
\R^+_0$ is said to belong to class $\mathcal{KL}$ if for all fixed
$s$, the map $r\mapsto \beta(r,s)$ belongs to class $\mathcal
K_\infty$ and for all fixed $r$, the map $s \mapsto \beta(r,s)$ is
strictly decreasing and $\beta(r,s)\rightarrow 0$ when
$s\rightarrow \infty$.

\begin{definition}~\cite{angeli2002} The subsystem $\Sigma_p$ is
incrementally globally asymptotically stable ($\delta$-GAS) if
there exists a $\mathcal{KL}$ function $\beta_p$ such that for all
$t\in \R_0^+$, for all $x,y \in \R^n$, the following condition is
satisfied:
$$
 \|\mathbf{x}(t,x,p)-\mathbf{x}(t,y,p)\| \le
\beta_p(\|x-y\|,t).
$$
\end{definition}

Intuitively, incremental stability means that all the trajectories
of the subsystem $\Sigma_p$ converge to the same reference
trajectory independently of their initial condition. This is an
incremental version of the notion of global asymptotic
stability (GAS)~\cite{khalil1996}. Let us remark that when $f_p$
satisfies $f_p(0)=0$ then $\delta$-GAS implies GAS, as all the
trajectories of $\Sigma_p$ converge to the trajectory
$\mathbf{x}(t,0,p)=0$. Further, if $f_p$ is linear then $\delta$-GAS and GAS
are equivalent. Similarly to GAS, $\delta$-GAS can be characterized by dissipation
inequalities.
\begin{definition} A smooth function $V_p:\R^n\times \R^n \rightarrow
\R^+_0$ is a $\delta$-GAS Lyapunov function
for
$\Sigma_p$ if there exist $\mathcal K_\infty$ functions
$\underline{\alpha}_{p}$, $\overline{\alpha}_{p}$ and $\kappa_p\in \R^+$ such that:
\begin{eqnarray}
\label{eq:lyap1}
\forall x,y\in \R^n,\; &
\underline{\alpha}_{p}(\|x-y\|) \le V_p(x,y) \le
\overline{\alpha}_{p}(\|x-y\|);\\
\label{eq:lyap2}
\forall x,y\in \R^n,\; &
\frac{\partial V_p}{\partial x}(x,y) f_p(x)+ \frac{\partial
V_p}{\partial y}(x,y) f_p(y) \le -\kappa_p V_p(x,y)
.
\end{eqnarray}
\end{definition}

The following result completely characterizes $\delta$-GAS in
terms of existence of a $\delta$-GAS Lyapunov function.
\begin{theorem}~\cite{angeli2002} $\Sigma_p$ is $\delta$-GAS if and only if
it admits a $\delta$-GAS Lyapunov function.
\end{theorem}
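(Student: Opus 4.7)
The proof splits into the two implications, with the ``only if'' direction being the substantive one.

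\textbf{Sufficiency ($\Leftarrow$).} Assume $V_p$ is a $\delta$-GAS Lyapunov function for $\Sigma_p$. Fix $x,y\in\R^n$ and set $W(t)=V_p(\mathbf{x}(t,x,p),\mathbf{x}(t,y,p))$. Because $V_p$ is smooth and the trajectories are $\mathcal C^1$, the chain rule together with (\ref{eq:lyap2}) gives $\dot W(t)\le -\kappa_p W(t)$, whence Gr\"onwall yields $W(t)\le e^{-\kappa_p t} V_p(x,y)$. Combining this with the two-sided sandwich (\ref{eq:lyap1}) produces
$$
\|\mathbf{x}(t,x,p)-\mathbf{x}(t,y,p)\|\;\le\;\underline{\alpha}_p^{-1}\!\left(e^{-\kappa_p t}\,\overline{\alpha}_p(\|x-y\|)\right),
$$
and the right-hand side defines a valid $\mathcal{KL}$ function $\beta_p$.

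\textbf{Necessity ($\Rightarrow$).} The plan is to reduce the incremental statement to a classical set-stability problem and invoke a known converse Lyapunov theorem. Consider the product system on $\R^{2n}$ with state $z=(x,y)$ and vector field $g(z)=(f_p(x),f_p(y))$; let $\Delta=\{(x,x):x\in\R^n\}$ denote the diagonal. The $\delta$-GAS estimate $\|\mathbf{x}(t,x,p)-\mathbf{x}(t,y,p)\|\le \beta_p(\|x-y\|,t)$ is precisely uniform global asymptotic stability of the closed, forward-invariant set $\Delta$ for the product flow, measured by the distance $d(z,\Delta)=\|x-y\|/\sqrt 2$. Applying the Lin--Sontag--Wang converse Lyapunov theorem for asymptotic stability of closed sets then furnishes a smooth $\widetilde V_p:\R^n\times\R^n\to\R_0^+$ and $\mathcal K_\infty$ functions $\underline{a}_p,\overline{a}_p,\alpha_p$ with
$$
\underline{a}_p(\|x-y\|)\le \widetilde V_p(x,y)\le \overline{a}_p(\|x-y\|),\qquad \frac{\partial \widetilde V_p}{\partial x}f_p(x)+\frac{\partial \widetilde V_p}{\partial y}f_p(y)\le -\alpha_p(\widetilde V_p(x,y)).
$$

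\textbf{Reshaping to obtain exponential decay.} The dissipation inequality above has the general form $-\alpha_p(\widetilde V_p)$, but the definition in the paper demands the stronger linear form $-\kappa_p V_p$. The standard remedy is to set $V_p=\rho\circ\widetilde V_p$ for a suitable smooth $\mathcal K_\infty$ function $\rho$ (a construction due to Praly--Wang): chasing the chain rule, one needs $\rho'(s)\,\alpha_p(s)\ge \kappa_p\,\rho(s)$, which is solved by choosing $\rho(s)=\exp\!\left(\int_1^s \kappa_p/\alpha_p(\tau)\,d\tau\right)$ on $(0,\infty)$ and extending smoothly at $0$; integrability of $\kappa_p/\alpha_p$ near $0$ and near $\infty$ can be arranged by first replacing $\alpha_p$ with a smooth $\mathcal K_\infty$ minorant having the right behaviour at the endpoints. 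The resulting $V_p$ is smooth, still sandwiched between functions of $\|x-y\|$ of class $\mathcal K_\infty$, and now satisfies (\ref{eq:lyap2}) with the desired exponential rate $\kappa_p$.

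\textbf{Main obstacle.} The sufficiency direction is a routine Gr\"onwall argument; the heart of the proof is the necessity direction. Within that, the purely qualitative converse Lyapunov step can be cited from the literature on set-stability, so the genuine technical point is the reshaping step: producing the linear dissipation rate $-\kappa_p V_p$ from a generic $-\alpha_p(\widetilde V_p)$ requires a careful choice of $\rho$ so that all three conditions (positivity, $\mathcal K_\infty$ bounds, integrability of $\kappa_p/\alpha_p$) are simultaneously met while preserving smoothness at the origin.
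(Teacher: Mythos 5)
The paper does not actually prove this theorem: it is quoted verbatim from \cite{angeli2002}, and the remark immediately following it notes that the exponential dissipation rate $-\kappa_p V_p$ is obtained from Angeli's original form $-\rho_p(\|x-y\|)$ by modifying $V_p$ as in \cite{praly1996}. Your proposal correctly reconstructs precisely that cited argument --- Gr\"onwall for sufficiency; for necessity, reduction of $\delta$-GAS to uniform global asymptotic stability of the diagonal for the product system together with the Lin--Sontag--Wang converse theorem (whose closed-set formulation does cover the noncompact diagonal, given the global $\mathcal{KL}$ estimate and forward completeness), followed by the Praly--Wang rescaling $V_p=\rho\circ\widetilde V_p$ to get the linear rate --- so it takes essentially the same route as the proof the paper points to.
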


\begin{remark}
In~\cite{angeli2002}, (\ref{eq:lyap2}) is replaced by
$
\frac{\partial V_p}{\partial x}(x,y) f_p(x)+ \frac{\partial
V_p}{\partial y}(x,y) f_p(y) \le -\rho_p(\|x-y\|),
$ 
where $\rho_p$ is a positive definite function. It is known, though not trivial to show, 
that there is no loss of generality
in considering $\rho_p(\|x-y\|)=\kappa_p V_p(x,y)$, modifying the $\delta$-GAS Lyapunov function $V_p$ if necessary
(see e.g.~\cite{praly1996}).
\end{remark}

For the purpose of this paper, we extend the notion of incremental stability to switched systems as follows:
\begin{definition} A switched system $\Sigma=(\R^n,P,\mathcal P,F)$ is
incrementally globally uniformly asymptotically stable ($\delta$-GUAS) if
there exists a $\mathcal{KL}$ function $\beta$ such that for all
$t\in \R_0^+$, for all $x,y \in \R^n$, for all switching signals $\mathbf{p} \in \mathcal P$, the following condition is
satisfied:
\begin{equation}
\label{eq:stab2}
 \|\mathbf{x}(t,x,\mathbf p)-\mathbf{x}(t,y,\mathbf p)\| \le
\beta(\|x-y\|,t).
\end{equation}
\end{definition}

Let us remark that the speed of convergence specified by the function $\beta$ is independent of the switching signal
$\mathbf p$. Thus, the stability property is  uniform over the set of switching signals; hence the notion
of incremental global uniform asymptotic stability.
Incremental stability of a switched system means that all the trajectories associated with the same switching signal
converge to the same reference trajectory independently of their initial condition. This is an  incremental version
of global uniform asymptotic stability (GUAS) for switched systems~\cite{liberzon2003}.
If for all $p\in P$, $f_p(0)=0$ (i.e. all the subsystems share a common equilibrium), then $\delta$-GUAS implies GUAS as all the trajectories of $\Sigma$ converge to the constant trajectory $\mathbf{x}(t,0,\mathbf p)=0$.
Further, if for all $p\in P$, $f_p$ is linear, $\delta$-GUAS and GUAS are equivalent.

It is well known that a switched system whose subsystems are all
GAS may exhibit some unstable behaviors under fast switching
signals. The same kind of
phenomenon can be observed for switched systems with $\delta$-GAS
subsystems. Similarly, the results on common or
multiple Lyapunov functions for proving GUAS of switched systems (see e.g.~\cite{liberzon2003}) 
can be extended to prove $\delta$-GUAS. 
Let the $\mathcal K_\infty$ functions $\underline{\alpha}$, $\overline{\alpha}$ and the
real number $\kappa$ be given by
$\underline{\alpha}=\min(\underline{\alpha}_1,\dots,\underline{\alpha}_m)$, 
$\overline{\alpha}=\max(\overline{\alpha}_1,\dots,\overline{\alpha}_m)$ and $\kappa = \min(\kappa_1,\dots,\kappa_m)$.

\begin{theorem}
\label{th:stab1}
Consider a switched system $\Sigma=(\R^n,P,\mathcal P,F)$. Let us assume that there exists $V:\R^n\times \R^n \rightarrow
\R^+_0$ which is a common 
 $\delta$-GAS Lyapunov function for subsystems
$\Sigma_1,\dots,\Sigma_m$. Then, $\Sigma$ is $\delta$-GUAS.
\end{theorem}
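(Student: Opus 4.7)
The plan is to reduce the switched-system case to the exponential decay of the common Lyapunov function along trajectories, and then recover a $\mathcal{KL}$ estimate via the comparison functions $\underline{\alpha}$ and $\overline{\alpha}$.

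Fix an initial pair $x,y \in \R^n$ and a switching signal $\mathbf{p}\in\mathcal P$, and define
\[
W(t) \;=\; V\!\left(\mathbf{x}(t,x,\mathbf{p}),\,\mathbf{x}(t,y,\mathbf{p})\right), \qquad t \ge 0.
\]
First I would observe that since both trajectories are continuous in $t$ and $V$ is smooth, $W$ is continuous on $\R_0^+$; also, since $\mathbf{p}$ has only finitely many discontinuities on any bounded interval, the trajectories are piecewise $\mathcal C^1$ and $W$ is piecewise $\mathcal C^1$ with only finitely many non-differentiability points per bounded interval.

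Next, on any open subinterval where $\mathbf{p}(t)\equiv p$ is constant, we have
\[
\dot W(t) \;=\; \frac{\partial V}{\partial x}\!\left(\mathbf{x}(t,x,\mathbf{p}),\mathbf{x}(t,y,\mathbf{p})\right)f_p(\mathbf{x}(t,x,\mathbf{p})) + \frac{\partial V}{\partial y}\!\left(\mathbf{x}(t,x,\mathbf{p}),\mathbf{x}(t,y,\mathbf{p})\right)f_p(\mathbf{x}(t,y,\mathbf{p})),
\]
which by the common-Lyapunov inequality \eqref{eq:lyap2} is bounded above by $-\kappa_p W(t) \le -\kappa\, W(t)$, with $\kappa = \min_{p\in P}\kappa_p > 0$. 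Because $W$ is continuous across switching times and this differential inequality holds on each maximal interval of continuity of $\mathbf{p}$, a standard comparison argument (apply Gronwall on each piece and chain the pieces together using continuity of $W$) yields
\[
W(t) \;\le\; e^{-\kappa t}\, W(0) \qquad \text{for all } t\ge 0.
\]

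Finally, I would invoke \eqref{eq:lyap1}: the left inequality gives $\underline{\alpha}(\|\mathbf{x}(t,x,\mathbf{p})-\mathbf{x}(t,y,\mathbf{p})\|) \le W(t)$, and the right inequality gives $W(0) \le \overline{\alpha}(\|x-y\|)$. Combining,
\[
\|\mathbf{x}(t,x,\mathbf{p})-\mathbf{x}(t,y,\mathbf{p})\| \;\le\; \underline{\alpha}^{-1}\!\left(e^{-\kappa t}\,\overline{\alpha}(\|x-y\|)\right).
\]
Setting $\beta(r,t) := \underline{\alpha}^{-1}(e^{-\kappa t}\overline{\alpha}(r))$ and checking that this is a $\mathcal{KL}$ function (it is continuous, strictly increasing to $\infty$ in $r$ for each fixed $t$ since $\underline{\alpha}^{-1}$ and $\overline{\alpha}$ are $\mathcal K_\infty$, and strictly decreases to $0$ in $t$ for each fixed $r$) establishes \eqref{eq:stab2} uniformly over $\mathbf p \in \mathcal P$.

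The only mildly delicate point, and the one I would treat with some care, is the chaining across switching instants: one must verify that although $\dot W$ may jump at switching times, the right-hand derivative bound $D^+ W(t) \le -\kappa W(t)$ holds on the complement of a discrete set and $W$ itself stays continuous, so the exponential envelope $e^{-\kappa t}W(0)$ is preserved globally. Once this is settled, everything else is a direct consequence of the common Lyapunov hypothesis and the definitions of $\underline{\alpha},\overline{\alpha},\kappa$.
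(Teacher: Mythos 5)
Your proof is correct and takes essentially the same route as the paper's: differentiate $V$ along the two trajectories on each interval where $\mathbf{p}$ is constant, obtain $\dot V \le -\kappa V$ from (\ref{eq:lyap2}) with $\kappa=\min(\kappa_1,\dots,\kappa_m)$, propagate the exponential bound across switching times using continuity of $t\mapsto V(\mathbf{x}(t,x,\mathbf p),\mathbf{x}(t,y,\mathbf p))$, and sandwich with $\underline{\alpha},\overline{\alpha}$ to arrive at the same $\mathcal{KL}$ function $\beta(r,s)=\underline{\alpha}^{-1}\left(\overline{\alpha}(r)e^{-\kappa s}\right)$. The only difference is stylistic: you spell out the chaining argument at switching instants (Gronwall on each piece, continuity of $W$ across the junctions), whereas the paper compresses this into the phrase ``it follows, by continuity.''
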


\begin{proof} Let $x,y\in \R^n$, $\mathbf{p} \in \mathcal P$, the function 
$t\mapsto V(\mathbf{x}(t,x,\mathbf p),\mathbf{x}(t,y,\mathbf p))$ is continuous, piecewise $\mathcal C^1$
and for all $t\in R^+_0$ where $\mathbf p$ is continuous,
equation (\ref{eq:lyap2}) gives:
\begin{eqnarray*}
\dot V(\mathbf{x}(t,x,\mathbf p),\mathbf{x}(t,y,\mathbf p))
& \le& - \kappa V(\mathbf{x}(t,x,\mathbf p),\mathbf{x}(t,y,\mathbf p)).
\end{eqnarray*}
It follows, by continuity, that for all $t\in \R^+_0$,
\begin{eqnarray*}
V(\mathbf{x}(t,x,\mathbf p),\mathbf{x}(t,y,\mathbf p)) &\le& V(\mathbf{x}(0,x,\mathbf p),\mathbf{x}(0,y,\mathbf p))e^{-\kappa t}=V(x,y)e^{-\kappa t}  \\ &\le& \overline{\alpha}(\|x-y\|) e^{-\kappa t}. 
\end{eqnarray*}
Therefore, for all $t \in \R^+_0$,
$$
\|\mathbf{x}(t,x,\mathbf p)-\mathbf{x}(t,y,\mathbf p)\| \le \underline{\alpha}^{-1} (V(\mathbf{x}(t,x,\mathbf p),\mathbf{x}(t,y,\mathbf p))) \le \underline{\alpha}^{-1} (\overline{\alpha}(\|x-y\|) e^{-\kappa t}).
$$
Then, equation (\ref{eq:stab2}) holds with the function $\beta$ given by 
$\beta(r,s)=\underline{\alpha}^{-1} (\overline{\alpha}(r) e^{-\kappa s})$. It is easy to check that $\beta$ belongs
to class $\mathcal{KL}$. Therefore, $\Sigma$ is $\delta$-GUAS.
\end{proof}

When a common  $\delta$-GAS Lyapunov function fails to exist, $\delta$-GUAS of the switched system can be ensured
by using multiple $\delta$-GAS Lyapunov functions and a restrained set of switching signals.
Let $\mathcal{S}_{\tau_d}(\R^+_0,P)$ denote the set of switching signals with 
{\it dwell time} ${\tau_d}\in \R^+_0$ so that $\mathbf p \in \mathcal{S}(\R^+_0,P)$ has dwell time $\tau_d$ if the switching times $t_1,\; t_2, \dots$ satisfy $t_1 \ge \tau_d$ and $t_{i}-t_{i-1} \ge \tau_d$, for all $i\ge 2$.

\begin{theorem}
\label{th:stab2}
Let $\tau_d \in \R^+_0$ and  consider a switched system $\Sigma_{\tau_d}=(\R^n,P,\mathcal P_{\tau_d},F)$ with $\mathcal P_{\tau_d}
 \subseteq \mathcal{S}_{\tau_d}(\R^+_0,P)$. 
 Let us assume that for all $p\in P$, there exists a $\delta$-GAS Lyapunov function $V_p$ for subsystem
$\Sigma_{\tau_d,p}$ and that in addition there exists $\mu \ge 1$ such that:
\begin{equation}
\label{eq:reset}
\forall x,y\in \R^n, \; \forall p,p' \in P, \; V_p(x,y) \le \mu V_{p'}(x,y).
\end{equation}
If $\tau_d > \frac{\log\mu}{\kappa}$, then $\Sigma_{\tau_d}$ is $\delta$-GUAS.
\end{theorem}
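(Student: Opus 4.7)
The plan is to mimic the proof of Theorem~\ref{th:stab1}, but now tracking a piecewise-defined Lyapunov value that is the active $V_{p}$ on each interval between switches, and quantifying how much is lost at each switch through the compatibility hypothesis~(\ref{eq:reset}). The key observation is that on any interval where $\mathbf{p}$ is constant and equal to $p$, inequality~(\ref{eq:lyap2}) for $V_p$ yields $\dot V_{p}(\mathbf{x}(t,x,\mathbf p),\mathbf{x}(t,y,\mathbf p))\le -\kappa_p V_{p}\le -\kappa V_{p}$, so $V_{p}$ decays by a factor of at least $e^{-\kappa (t-s)}$ across the interval $[s,t]$.

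The second ingredient handles the discontinuities. Let $0=t_0<t_1<t_2<\dots$ denote the switching times of $\mathbf p$, and let $p_i$ be the value of $\mathbf p$ on $[t_i,t_{i+1})$. Using~(\ref{eq:reset}) at each switching time, the right limit of $V_{p_i}$ is bounded by $\mu$ times the left limit of $V_{p_{i-1}}$. Combining this jump bound with the exponential decay between switches, I expect by induction on $i$ that, for $t\in[t_i,t_{i+1})$,
\begin{equation*}
V_{p_i}(\mathbf{x}(t,x,\mathbf p),\mathbf{x}(t,y,\mathbf p)) \le \mu^{i}\, V_{p_0}(x,y)\, e^{-\kappa t}.
\end{equation*}

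With dwell time $\tau_d$, the number of switches in $[0,t]$ is at most $\lfloor t/\tau_d\rfloor \le t/\tau_d$, so $\mu^{i}\le e^{(\log\mu/\tau_d)\, t}$, and hence
\begin{equation*}
V_{p_i}(\mathbf{x}(t,x,\mathbf p),\mathbf{x}(t,y,\mathbf p)) \le \overline{\alpha}(\|x-y\|)\, e^{-(\kappa-\log\mu/\tau_d)\, t}.
\end{equation*}
The hypothesis $\tau_d>\log\mu/\kappa$ makes the exponent $\lambda:=\kappa-\log\mu/\tau_d$ strictly positive. Applying $\underline\alpha^{-1}$ via~(\ref{eq:lyap1}) for $V_{p_i}$ yields the desired $\mathcal{KL}$ bound with $\beta(r,s)=\underline{\alpha}^{-1}(\overline{\alpha}(r)e^{-\lambda s})$, and one checks as in Theorem~\ref{th:stab1} that $\beta\in\mathcal{KL}$.

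The only non-routine step I anticipate is the bookkeeping at the switching instants: one must be careful that $V_{p_i}$ and $V_{p_{i-1}}$ evaluated at the \emph{same} pair of states differ by a factor of at most $\mu$ thanks to~(\ref{eq:reset}), and that the trajectory itself is continuous across $t_i$ (so no state jump occurs). Once this is set up cleanly the induction is immediate. The case $t\in[0,t_1)$ (no switch yet) and the handling of signals with only finitely many switches on $[0,\infty)$ are covered by the same estimate, since they correspond to $i=0$ or to eventually constant tails for which pure exponential decay applies.
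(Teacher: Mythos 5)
Your proposal is correct and follows essentially the same route as the paper's own proof: exponential decay of the active Lyapunov function between switches from (\ref{eq:lyap2}) with $\kappa=\min_p \kappa_p$, a factor-$\mu$ loss at each switching instant from (\ref{eq:reset}) (legitimate since the trajectory is continuous, so the two Lyapunov functions are compared at the same state pair), an induction giving $V_{p_i}\le \mu^i e^{-\kappa t}V_{p_0}(x,y)$ on $[t_i,t_{i+1})$, and the dwell-time bound $i\le t/\tau_d$ together with $\mu\ge 1$ to absorb $\mu^i$ into $e^{(\log\mu/\tau_d)t}$, yielding the same class-$\mathcal{KL}$ function $\beta(r,s)=\underline{\alpha}^{-1}\left(\overline{\alpha}(r)\,e^{(\log\mu/\tau_d-\kappa)s}\right)$. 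The only differences are cosmetic (your mode indexing starts at $p_0$ rather than the paper's $p_1$, and the paper likewise defers signals with finitely many switches to the same estimate).
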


\begin{proof} We shall prove the $\delta$-GUAS property only for switching signals with an infinite number of discontinuities but a proof for signals with a finite number of discontinuities can be written in a very similar way.
Let $x,y\in \R^n$, $\mathbf{p} \in \mathcal P_{\tau_d}$, 
let $t_0=0$ and let $p_{i+1} \in P$ denote the value of the switching signal on the open interval $(t_{i},t_{i+1})$, for 
$i \in \N$. From equation (\ref{eq:lyap2}), for all $i\in \N$ and $t\in (t_{i},t_{i+1})$
$$
\dot V_{p_{i+1}}(\mathbf{x}(t,x,\mathbf p),\mathbf{x}(t,y,\mathbf p)) \le -\kappa V_{p_{i+1}}(\mathbf{x}(t,x,\mathbf p),\mathbf{x}(t,y,\mathbf p)).
$$
Then, for all $i\in \N$ and $t\in [t_{i},t_{i+1}]$, 
\begin{equation}
\label{eq:1}
V_{p_{i+1}}(\mathbf{x}(t,x,\mathbf p),\mathbf{x}(t,y,\mathbf p)) \le V_{p_{i+1}}(\mathbf{x}(t_{i},x,\mathbf p),\mathbf{x}(t_{i},y,\mathbf p)) e^{- \kappa (t-t_{i})}.
\end{equation}
Particularly, for $t=t_{i+1}$ and from equation (\ref{eq:reset}), it follows that for all $i \in \N$,
$$
V_{p_{i+2}}(\mathbf{x}(t_{i+1},x,\mathbf p),\mathbf{x}(t_{i+1},y,\mathbf p)) \le 
\mu e^{- \kappa (t_{i+1}-t_{i})}
V_{p_{i+1}}(\mathbf{x}(t_{i},x,\mathbf p),\mathbf{x}(t_{i},y,\mathbf p)). 
$$
Using this inequality, we prove by induction that for all $i\in \N$
\begin{equation}
\label{eq:2}
V_{p_{i+1}}(\mathbf{x}(t_i,x,\mathbf p),\mathbf{x}(t_i,y,\mathbf p)) 
\le \mu^i e^{- \kappa t_i} V_{p_1}(x,y).
\end{equation}
Then, from equations (\ref{eq:1}) and (\ref{eq:2}), for all $i\in \N$ and $t\in [t_{i},t_{i+1}]$, 
$$
V_{p_{i+1}}(\mathbf{x}(t,x,\mathbf p),\mathbf{x}(t,y,\mathbf p)) \le \mu^i e^{- \kappa t} V_{p_1}(x,y). 
$$
Since the switching signal $\mathbf{p}$ has dwell time $\tau_d$, it follows that $t_i \ge i \tau_d$ and therefore for all
$t\in [t_{i},t_{i+1}]$, $t \ge i \tau_d$. Since 
$\mu \geq 1 $, then for all $i\in \N$ and $t\in [t_{i},t_{i+1}]$,
$$
\mu^i = e^{i \log\mu} \le e^{\frac{\log \mu}{\tau_d} t}.
$$
Hence, for all $i\in \N$ and  $t\in [t_{i},t_{i+1}]$
$$
V_{p_{i+1}}(\mathbf{x}(t,x,\mathbf p),\mathbf{x}(t,y,\mathbf p)) \le e^{\left(\frac{\log \mu}{\tau_d}- \kappa\right) t} V_{p_1}(x,y) \le \overline{\alpha}(\|x-y\|)  e^{\left(\frac{\log \mu}{\tau_d}- \kappa\right) t}. 
$$
Therefore, for all $t \in \R^+_0$,
$$
\|\mathbf{x}(t,x,\mathbf p)-\mathbf{x}(t,y,\mathbf p)\| \le \underline{\alpha}^{-1} \left(\overline{\alpha}(\|x-y\|)  e^{\left(\frac{\log \mu}{\tau_d}- \kappa\right) t}\right).
$$
Equation (\ref{eq:stab2}) holds with the function $\beta$ given by 
$\beta(r,s)=\underline{\alpha}^{-1} \left(\overline{\alpha}(r) e^{\left(\frac{\log \mu}{\tau_d}- \kappa\right) s}\right)$  which belongs to class $\mathcal{KL}$ since by assumption $\frac{\log \mu}{\tau_d}- \kappa <0$. 
The same inequality can be shown for switching signals with a finite number of discontinuities; thus, $\Sigma$ is $\delta$-GUAS.
\end{proof}

In the following, we show that under the assumptions of Theorems \ref{th:stab1} or \ref{th:stab2}, ensuring incremental stability, it is possible to compute
approximately equivalent symbolic models of switched systems. We will make the following supplementary assumption on the $\delta$-GAS Lyapunov functions: for all $p\in P$, there exists a $\mathcal{K}_\infty$ function $\gamma_p$ such that 
\begin{equation}
\label{eq:assum}
\forall x,y,z \in \R^n, \;  |V_p(x,y)-V_p(x,z)| \le \gamma_p(\|y-z\|).
\end{equation}
Note that $\gamma_p$ is not a function of the variable $x$. It is convenient, for later use, to define the $\mathcal K_\infty$ function $\gamma$  by $\gamma = \max(\gamma_1,\dots ,\gamma_m)$.
We will discuss this assumption later in the paper and we will show that
it is not restrictive provided we are interested in the dynamics of the switched system on a compact subset of the state space $\R^n$.

\section{Approximate bisimulation}

In this section, we present a notion of approximate equivalence
which will relate a switched system to the symbolic models that we construct. We
start by introducing the class of transition systems which allows
us to model switched and symbolic systems in a common framework.
\begin{definition}
A transition system is a sextuple
$T=(Q,L,\rTo,O,H,I)$
consisting of:
\begin{itemize}
    \item a set of states $Q$;
    \item a set of labels $L$;
    \item a transition relation $\rTo\subseteq Q\times L \times Q$;
    \item an output set $O$;
    \item an output function $H:Q\rightarrow O$;
    \item a set of initial states $I \subseteq Q$.
\end{itemize}
$T$ is said to be \textit{metric} if the
output set $O$ is equipped with a metric ${d}$, \textit{countable}
if $Q$ and $L$ are countable sets, \textit{finite}, if $Q$ and $L$
are finite sets.
\end{definition}

The transition $(q,l,q')\in \rTo$ will
be denoted $q\rTo^{l} q'$ and means that the system can evolve from state $q$ to state $q'$ under the action
labelled by $l$. Thus, the transition relation captures the
dynamics of the transition system. 

Transition systems can serve as abstract models for describing
switched systems. Given a switched system $\Sigma=(\R^n,P, \mathcal
P, F)$ where $\mathcal P=\mathcal S(\R^+_0,P)$, we define the associated transition system 
$
T(\Sigma)=(Q,L,\rTo,O,H,I),
$
where the set of states is $Q=\R^n$; 
the set of labels is $L=P\times \R^+$;
the transition relation is given by 
$$
x\rTo^{p,\tau} x' \text{ if and only if } \mathbf{x}(\tau,x,p)=x',
$$ 
i.e. subsystem $\Sigma_p$ goes from state $x$ to state $x'$ in time $\tau$;
the set of outputs is $O=\R^n$; 
the observation map $H$ is the identity map over $\R^n$; 
the set of initial states is 
$I = \R^n$.
The transition system $T(\Sigma)$ is metric when the set of
outputs $O=\R^n$ is equipped with the metric $d(x,x')=\|x-x'\|$.
Note that the state space of $T(\Sigma)$ is infinite.

Usual equivalence relationships between transition systems rely
on the equality of observed behaviors. In this paper, we are mostly
interested in bisimulation equivalence~\cite{milner1989,park1981}.
Intuitively, a bisimulation relation between two transition
systems $T_1$ and $T_2$ is a relation between their set of states
explaining how a trajectory of $T_1$ can be transformed into
a trajectory of $T_2$ with the same associated sequence of outputs, and vice versa. 
The requirement of equality of output sequences, as in the
 classical formulation of bisimulation~\cite{milner1989,park1981} is quite strong for metric transition systems. We shall relax this, 
 by requiring output sequences to be close where closeness is measured with respect to the metric on the output space. This
relaxation leads to the notion of approximate bisimulation
relation introduced in~\cite{girard2007}.

\begin{definition}\label{def:bisim} Let $T_1=(Q_1,L,\rTo_1,O,H_1,I_1)$,
$T_2=(Q_2,L,\rTo_2,O,H_2,I_2)$ be metric transition systems with the
same sets of labels $L$ and outputs $O$ equipped with the 
metric $d$. Let $\varepsilon \in \R^+_0$ be a given precision, a
relation $R \subseteq Q_1\times Q_2$ is said to be an
$\varepsilon$-approximate bisimulation relation between $T_1$ and $T_2$
if for all $(q_1,q_2)\in R$:
\begin{itemize}
\item $d(H_1(q_1),H_2(q_2)) \le \varepsilon$;

\item for all $q_1\rTo_1^{l} q_1'$, there exists $q_2\rTo_2^l
q_2'$, such that $(q_1',q_2')\in R$;

\item for all $q_2\rTo_2^{l} q_2'$, there exists $q_1\rTo_1^l
q_1'$, such that $(q_1',q_2')\in R$.

\end{itemize}
The transition systems $T_1$ and $T_2$ are said to be
approximately bisimilar with precision $\varepsilon$, denoted
$T_1\sim_\varepsilon T_2$, if:
\begin{itemize}
\item for all $q_1\in I_1$, there exists $q_2 \in I_2$, such that
$(q_1,q_2) \in R$;

\item for all $q_2\in I_2$, there exists $q_1 \in I_1$, such that
$(q_1,q_2) \in R$.
\end{itemize}
\end{definition}

\section{Approximately bisimilar symbolic models}

In the following, we will work with a sub-transition system of
$T(\Sigma)$ obtained by selecting the transitions of $T(\Sigma)$
that describe trajectories of duration $\tau_s$ for some chosen
$\tau_s \in \R^+$. This can be seen as a sampling process. Particularly, we suppose that
switching instants can only occur at times of the form $i \tau_s$
with $i\in \N$.
This is a natural constraint when the switching in
$\Sigma$ has to be controlled by a  microprocessor with clock period
$\tau_s$.
Given a switched system $\Sigma=(\R^n,P,\mathcal P, F)$ where $\mathcal P=\mathcal S(\R^+_0,P)$, and
a time sampling parameter $\tau_s \in \R^+$, we define the associated transition
system
$T_{\tau_s}(\Sigma)=(Q_1,L_1,\rTo_1,O_1,H_1,I_1)$
where the set of states is $Q_1=\R^n$; the set of labels is $L_1=P$;
the transition relation is given by 
$$
x\rTo_1^p x' \text{ if and only if } \mathbf{x}(\tau_s,x,p)=x'; 
$$
the set of outputs is $O_1=\R^n$;
the observation map $H_1$ is the identity map over $\R^n$; the set of initial states is $I_1=\R^n$.
The transition system $T_{\tau_s}(\Sigma)$ is metric when the set of
outputs $O_1=\R^n$ is equipped with the metric $d(x,x')=\|x-x'\|$.

\subsection{Common Lyapunov function}

We first examine the simpler case when there exists a common $\delta$-GAS Lyapunov function $V$ for subsystems $\Sigma_1,\dots,\Sigma_m$. 
We start by approximating the set of states $Q_1=\R^n$ by the
lattice:
$$
[\R^n]_{\eta}=\left\{q\in \R^n \,\,\left| \; q_{i}
=k_{i}\frac{2\eta}{\sqrt{n}},\; k_{i}\in\mathbb{Z},\;
i=1,...,n\right.\right\},
$$
where $\eta \in \R^+$ is a state space discretization parameter. By
simple geometrical considerations, we can check that for all $x\in
\R^n$, there exists $q\in [\R^n]_{\eta}$ such that $\|x-q\| \le
\eta$.

Let us define the approximate transition system
$
T_{\tau_s,\eta}(\Sigma)=(Q_2,L_2,\rTo_2,O_2,H_2,I_2),
$
where the set of states is $Q_2=[\R^n]_{\eta}$; the set of labels remains the same $L_2=L_1=P$;
the transition relation is given by 
$$
q \rTo_2^p q' \text{ if and only if } \|\mathbf{x}(\tau_s,q,p)-q'\|\le \eta;
$$ 
the set of outputs remains the same $O_2=O_1=\R^n$; the observation map $H_2$ is the natural inclusion map from
$[\R^n]_{\eta}$ to $\R^n$, i.e. $H_2(q)=q$; the set of initial states is $I_2=[\R^n]_{\eta}$.
Note that the transition system $T_{\tau_s,\eta}(\Sigma)$ is
countable. Moreover, it is metric when the set of outputs
$O_2=\R^n$ is equipped with the metric $d(q,q')=\|q-q'\|$. An illustration of the approximation principle is shown on 
Figure~\ref{fig:approx}.

\begin{figure}[!h]
\begin{center}
\input{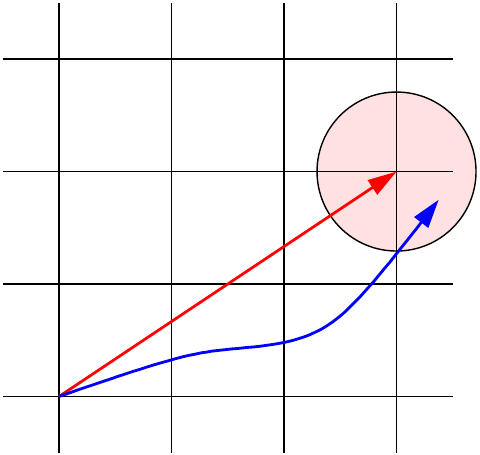_t}
\end{center}
\caption{\label{fig:approx} Approximation principle for the computation of the symbolic model.}
\end{figure}

We now give the result that relates the existence of a common $\delta$-GAS Lyapunov function for the
subsystems $\Sigma_1,\dots,\Sigma_m$ to the existence of approximately bisimilar symbolic
models for the transition system $T_{\tau_s}(\Sigma)$.

\begin{theorem}\label{th:symb} Consider a switched system $\Sigma=(\R^n,P,\mathcal P,F)$ with $\mathcal P = \mathcal{S}(\R^+_0,P)$, time and state space sampling parameters $\tau_s, \eta\in \R^+$  and a desired
precision $\varepsilon \in \R^+$. Let us assume that there exists $V:\R^n\times \R^n \rightarrow
\R^+_0$ which is a common $\delta$-GAS Lyapunov function for subsystems
$\Sigma_1,\dots,\Sigma_m$ and such that equation (\ref{eq:assum}) holds for some $\mathcal{K}_{\infty}$ function $\gamma$. If 
\begin{equation}
\label{eq:simcond}
\eta \le \min\left\{ 
\gamma^{-1} \left((1-e^{-\kappa \tau_s}) \underline{\alpha}(\varepsilon)\right),
\overline{\alpha}^{-1} \left(  \underline{\alpha}(\varepsilon) \right)
\right\}
\end{equation}
then,
the transition systems $T_{\tau_s}(\Sigma)$ and
$T_{\tau_s,\eta}(\Sigma)$ are approximately bisimilar with precision
$\varepsilon$.
\end{theorem}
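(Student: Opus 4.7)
The plan is to exhibit an explicit $\varepsilon$-approximate bisimulation relation between $T_{\tau_s}(\Sigma)$ and $T_{\tau_s,\eta}(\Sigma)$, using the Lyapunov function $V$ itself as a surrogate distance rather than the Euclidean norm. Specifically, I would set
$$
R = \bigl\{(x,q) \in \R^n \times [\R^n]_\eta \;:\; V(x,q) \le \underline{\alpha}(\varepsilon)\bigr\}.
$$
Three things then need to be verified: the output condition for every pair in $R$, the two symmetric simulation conditions, and the initial-state conditions. The two hypotheses on $\eta$ in \eqref{eq:simcond} will correspond, respectively, to the propagation step under transitions and to the matching of initial states.

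For the output condition, if $(x,q) \in R$ then $\underline{\alpha}(\|x-q\|) \le V(x,q) \le \underline{\alpha}(\varepsilon)$, so $\|x-q\| \le \varepsilon$ as required. For the forward simulation step, given $x \rTo_1^p x'$ with $x' = \mathbf{x}(\tau_s,x,p)$, I would choose any $q' \in [\R^n]_\eta$ with $\|\mathbf{x}(\tau_s,q,p) - q'\| \le \eta$, which exists by the lattice property and gives the transition $q \rTo_2^p q'$. Using the contractive inequality \eqref{eq:lyap2} (as in the proof of Theorem~\ref{th:stab1}) on the interval $[0,\tau_s]$ one gets
$$
V\bigl(x',\mathbf{x}(\tau_s,q,p)\bigr) \le e^{-\kappa \tau_s} V(x,q) \le e^{-\kappa \tau_s}\underline{\alpha}(\varepsilon),
$$
and the extra hypothesis \eqref{eq:assum} then yields
$$
V(x',q') \le V\bigl(x',\mathbf{x}(\tau_s,q,p)\bigr) + \gamma\bigl(\|\mathbf{x}(\tau_s,q,p) - q'\|\bigr) \le e^{-\kappa \tau_s}\underline{\alpha}(\varepsilon) + \gamma(\eta).
$$
The first bound in \eqref{eq:simcond}, $\gamma(\eta) \le (1-e^{-\kappa\tau_s})\underline{\alpha}(\varepsilon)$, is exactly what is needed to conclude $V(x',q') \le \underline{\alpha}(\varepsilon)$, i.e.\ $(x',q') \in R$. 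The backward simulation condition is handled symmetrically: given $q \rTo_2^p q'$, take $x' = \mathbf{x}(\tau_s,x,p)$ so that $x \rTo_1^p x'$, and the same estimate applies verbatim.

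Finally, for the initial states, given any $x \in I_1 = \R^n$ I pick $q \in [\R^n]_\eta$ with $\|x-q\| \le \eta$, so that $V(x,q) \le \overline{\alpha}(\eta)$; the second bound in \eqref{eq:simcond}, $\overline{\alpha}(\eta) \le \underline{\alpha}(\varepsilon)$, gives $(x,q) \in R$. In the other direction, for $q \in I_2$ simply take $x = q \in I_1$, giving $V(x,q) = V(q,q) = 0$ by \eqref{eq:lyap1}. There is no real obstacle in the proof; the only subtle point is recognizing that the proper relation lives at the Lyapunov level rather than at the Euclidean level, which is what makes the propagation and the initial-state conditions decouple cleanly into the two summands of the $\min$ in \eqref{eq:simcond}.
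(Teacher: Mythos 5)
Your proposal is correct and follows essentially the same route as the paper's own proof: the identical relation $R=\{(x,q): V(x,q)\le \underline{\alpha}(\varepsilon)\}$, the same propagation estimate $V(x',q')\le e^{-\kappa\tau_s}V(x,q)+\gamma(\eta)$ combining the Lyapunov decay with assumption (\ref{eq:assum}), the same symmetric treatment of the backward step, and the same use of the two terms in the $\min$ of (\ref{eq:simcond}) for the transition and initial-state conditions respectively. Your closing observation---that the relation must live at the Lyapunov level rather than the Euclidean level---is exactly the point the authors themselves emphasize when contrasting their construction with earlier work.
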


\begin{proof} We start by showing that the relation $R \subseteq
Q_1 \times Q_2$ defined by $(x,q)\in R$, if and only if $V(x,q) \le \underline{\alpha}(\varepsilon)$,
is an $\varepsilon$-approximate bisimulation relation. Let $(x,q)\in R$, then we have that
$
\|x-q\| \le \underline{\alpha}^{-1} \left(V(x,q) \right) \le \varepsilon.
$
Thus, the first condition of Definition~\ref{def:bisim} holds.
Let $x \rTo_1^p x'$, then $x'=\mathbf{x}(\tau_s,x,p)$. There exists
$q' \in [\R^n]_{\eta}$ such that
$\|\mathbf{x}(\tau_s,q,p)-q'\| \le \eta.$
Then, we have $q \rTo_2^p q'$. Let us check that $(x',q')\in R$. From equation (\ref{eq:assum}),
$$
|V(x',q') - V(x',\mathbf{x}(\tau_s,q,p))| \le \gamma(\|q' -\mathbf{x}(\tau_s,q,p))\|) \le \gamma(\eta).
$$
It follows that
\begin{eqnarray}
\nonumber
V(x',q') &\le&  V(x',\mathbf{x}(\tau_s,q,p)) + \gamma(\eta) = V(\mathbf{x}(\tau_s,x,p),\mathbf{x}(\tau_s,q,p))+ \gamma(\eta) \\
\label{eq:4}
& \le & e^{-\kappa \tau_s} V(x,q) + \gamma(\eta)
\end{eqnarray}
because $V$ is a $\delta$-GAS Lyapunov function for subsystem $\Sigma_p$. Then, from equation~(\ref{eq:simcond})
 and since $\gamma$
is a $\mathcal K_\infty$ function,
$$
V(x',q') \le e^{-\kappa \tau_s}\underline{\alpha}(\varepsilon) + \gamma(\eta)\le \underline{\alpha}(\varepsilon).
$$
Hence, $(x',q')\in R$.
In a similar way, we can prove
that, for all $q\rTo_2^p q'$, there is $x \rTo_1^p x'$ such that
$(x',q')\in R$. Hence $R$ is an $\varepsilon$-approximate bisimulation
relation between $T_{\tau}(\Sigma)$ and $T_{\tau,\eta}(\Sigma)$.

By definition of $I_2=[\R^n]_{\eta}$, for all $x \in I_1=\R^n$,
there exists $q \in I_2$ such that $\|x-q\|\le \eta$. 
Then, 
$$
V(x,q) \le \overline{\alpha}(\|x-q\|) \le \overline{\alpha}(\eta) \le \underline{\alpha}(\varepsilon)
$$
because of equation~(\ref{eq:simcond}) and $\overline{\alpha}$ is a $\mathcal K_\infty$ function. Hence, $(x,q)\in R$.
Conversely, for all $q \in I_2$, $x=q \in
\R^n=I_1$, then $V(x,q)=0$ and $(x,q)\in R$. Therefore, $T_{\tau_{s}}(\Sigma)$ and
$T_{\tau_{s},\eta}(\Sigma)$ are approximately bisimilar with precision
$\varepsilon$.
\end{proof}


Let us remark that, for a given time sampling parameter $\tau_s$ and a desired precision $\varepsilon\in \R^+$, there
always exists $\eta \in \R^+ $ sufficiently small such that equation (\ref{eq:simcond}) holds. This means that for switched systems admitting a common $\delta$-GAS Lyapunov function there exists approximately bisimilar symbolic models and any precision can be reached for all sampling rates.

The approach presented in this section for the computation of symbolic abstractions is quite similar to the approach presented in~\cite{pola2007} for $\delta$-GAS continuous control systems. Though, instead of defining the approximate bisimulation relation using the infinity norm as in~\cite{pola2007}, we use
sublevel sets of the common $\delta$-GAS Lyapunov function. This makes it possible, unlike in~\cite{pola2007}, to compute symbolic models for
arbitrary small time sampling parameter $\tau_s$. Further, this allows us to extend our approach to switched systems with multiple $\delta$-GAS Lyapunov functions.

\subsection{Multiple Lyapunov functions}

If a common $\delta$-GAS Lyapunov function does not exist, it remains possible to compute approximately bisimilar symbolic models provided we restrict the set of switching signals using a dwell time $\tau_d$. In this section, we consider a switched system $\Sigma_{\tau_d}=(\R^n,P, \mathcal P, F)$ where $\mathcal P= \mathcal S_{\tau_d}(\R_0^+,P)$. 
Let $\tau_s$ be a time sampling parameter; for simplicity and without loss of generality, we will assume that the dwell time $\tau_d$ is an integer multiple of $\tau_s$: there exists $N\in \N$ such that $\tau_d= N \tau_s$. Representing $\Sigma_{\tau_d}$ using a transition system is a bit less trivial than previously as we need to record inside the state of the transition system the time elapsed since the latest switching occurred. Thus, the transition system associated with $\Sigma_{\tau_d}$ is
$T_{\tau_s}(\Sigma_{\tau_d})=(Q_1,L_1,\rTo_1,O_1,H_1,I_1)$ 
where:
\vspace{-0.2cm}
\begin{itemize}
\item The set of states is $Q_1=\R^n \times P \times \{0,\dots,N-1\}$, a state $(x,p,i)\in Q_1$ means that the current state of $\Sigma_{\tau_d}$ is $x$, the current value of the switching signal is $p$ and the time elapsed since the latest switching is exactly $i\tau_s$, if $i<N-1$, or at least $(N-1)\tau_s$, if $i=N-1$.
\item The set of labels is $L_1=P$.
\item The transition relation is given by $(x,p,i) \rTo_1^l (x',p',i')$ if and only if $l=p$ and one the following holds:
\begin{itemize}
\item  $i<N-1$, $x'=\mathbf{x}(\tau_s,x,p)$, $p'=p$ and $i'=i+1$: switching is not allowed because the time elapsed since the latest switch is strictly smaller than the dwell time;
\item  $i=N-1$, $x'=\mathbf{x}(\tau_s,x,p)$, $p'=p$ and $i'=N-1$: switching is allowed but no switch occurs;
\item  $i=N-1$, $x'=\mathbf{x}(\tau_s,x,p)$, $p' \ne p$ and $i'=0$: switching is allowed and a switch occurs.
\end{itemize}
\item The set of outputs is $O_1=\R^n$.
\item The observation map $H_1$ is given by $H_1((x,p,i))=x$. 
\item The set of initial states is $I_1=\R^n \times P \times \{0\}$.
\end{itemize}
One can verify that the output trajectories of $T_{\tau_s}(\Sigma_{\tau_d})$ are the output trajectories of $T_{\tau_s}(\Sigma)$ associated with switching signals with dwell time $\tau_d=N\tau_s$. 
The approximation of the set of states of $T_{\tau_s}(\Sigma_{\tau_d})$ by a symbolic model is done using a lattice, as previously. Let $\eta \in \R^+$ be a state space discretization parameter, we define the transition system
$
T_{\tau_s,\eta}(\Sigma_{\tau_d})=(Q_2,L_2,\rTo_2,O_2,H_2,I_2)
$
where:
\vspace{-0.2cm}
\begin{itemize}
\item The set of states is $Q_2=[\R^n]_{\eta} \times P \times \{0,\dots,N-1\}$.
\item The set of labels remains the same $L_2=L_1=P$.
\item The transition relation is given by $(q,p,i) \rTo_2^l (q',p',i')$ if and only if $l=p$ and one of the following holds:
\begin{itemize}
\item  $i<N-1$, $\|\mathbf{x}(\tau_s,q,p)-q'\| \le \eta$, $p'=p$ and $i'=i+1$;
\item  $i=N-1$, $\|\mathbf{x}(\tau_s,q,p)-q'\| \le \eta$, $p'=p$ and $i'=N-1$;
\item  $i=N-1$, $\|\mathbf{x}(\tau_s,q,p)-q'\| \le \eta$, $p' \ne p$ and $i'=0$.
\end{itemize}
\item The set of outputs remains the same $O_2=O_1=\R^n$.
\item The observation map $H_2$ is given by $H_2((q,p,i))=q$. 
\item The set of initial states is $I_2=[\R^n]_{\eta} \times P \times \{0\}$.
\end{itemize}
Note that the transition system $T_{\tau_s,\eta}(\Sigma_{\tau_d})$ is
countable. Moreover, $T_{\tau_s}(\Sigma_{\tau_d})$ and $T_{\tau_s,\eta}(\Sigma_{\tau_d})$ are metric when the set of outputs
$O_1=O_2=\R^n$ is equipped with the metric $d(x,x')=\|x-x'\|$. The following theorem establishes the approximate
equivalence of $T_{\tau_s}(\Sigma_{\tau_d})$  and $T_{\tau_s,\eta}(\Sigma_{\tau_d})$.

\begin{theorem}\label{th:symb2} Consider $\tau_d \in \R^+_0$, 
a switched system $\Sigma_{\tau_d}=(\R^n,P,\mathcal P,F)$ with $\mathcal P = \mathcal{S}_{\tau_d}(\R^+_0,P)$, time and state space sampling parameters $\tau_s, \eta\in \R^+$  and a desired
precision $\varepsilon \in \R^+$. Let us assume that for all $p\in P$, there exists a $\delta$-GAS Lyapunov function $V_p$ for subsystem
$\Sigma_{\tau_d,p}$ and that equations (\ref{eq:reset}) and (\ref{eq:assum}) hold for some $\mu \ge 1$ and $\mathcal K_\infty$ functions $\gamma_1,\dots,\gamma_m$. 
If $\tau_d > \frac{\log\mu}{\kappa}$ and
\begin{equation}
\label{eq:simconda}
\eta \le \min\left\{ 
\gamma^{-1} \left(\frac{\frac{1}{\mu} - e^{-\kappa \tau_d}}{1-e^{-\kappa \tau_d}} (1-e^{-\kappa \tau_s}) \underline{\alpha}(\varepsilon)\right),
\overline{\alpha}^{-1} \left(  \underline{\alpha}(\varepsilon) \right)
\right\}
\end{equation}
then,
the transition systems $T_{\tau_s}(\Sigma_{\tau_d})$ and
$T_{\tau_s,\eta}(\Sigma_{\tau_d})$ are approximately bisimilar with precision
$\varepsilon$.
\end{theorem}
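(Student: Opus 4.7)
The plan is to adapt the proof of Theorem~\ref{th:symb}, replacing the constant sublevel set $\{V\le\underline{\alpha}(\varepsilon)\}$ used there by a time-indexed family $\{V_p\le\sigma_i\}$ that shrinks during each dwell period so as to leave enough slack to absorb the factor-$\mu$ jump of the Lyapunov function at the next switch. Concretely I set $\sigma_0=\underline{\alpha}(\varepsilon)$ and $\sigma_{i+1}=e^{-\kappa\tau_s}\sigma_i+\gamma(\eta)$ for $i=0,\dots,N-2$, and define $R\subseteq Q_1\times Q_2$ as the set of pairs $((x,p,i),(q,p',i'))$ with $p=p'$, $i=i'$, and $V_p(x,q)\le\sigma_i$.

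The first observation is that (\ref{eq:simconda}), together with $\mu\ge 1$, forces $\gamma(\eta)\le(1-e^{-\kappa\tau_s})\underline{\alpha}(\varepsilon)$, so the sequence $(\sigma_i)$ is nonincreasing and in particular $\sigma_i\le\underline{\alpha}(\varepsilon)$ for every $i$. Hence any pair in $R$ satisfies $\|x-q\|\le\underline{\alpha}^{-1}(V_p(x,q))\le\varepsilon$, which handles the output-closeness condition. I then check the three transition types: for a non-switching step with $i<N-1$, the computation of (\ref{eq:4}) applied to $V_p$ gives $V_p(x',q')\le e^{-\kappa\tau_s}V_p(x,q)+\gamma(\eta)\le\sigma_{i+1}$; the self-loop at $i=N-1$ is handled by the same bound together with the monotonicity $\sigma_{N-1}\ge e^{-\kappa\tau_s}\sigma_{N-1}+\gamma(\eta)$; and at a switch $p\to p'\neq p$ with $i=N-1$, combining the decay with the reset inequality (\ref{eq:reset}) yields $V_{p'}(x',q')\le\mu V_p(x',q')\le\mu\bigl(e^{-\kappa\tau_s}\sigma_{N-1}+\gamma(\eta)\bigr)$, which must be bounded by $\sigma_0=\underline{\alpha}(\varepsilon)$. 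The symmetric clause (matching transitions of $T_{\tau_s,\eta}(\Sigma_{\tau_d})$ by transitions of $T_{\tau_s}(\Sigma_{\tau_d})$) is handled identically, as in Theorem~\ref{th:symb}. The initial-state correspondence is also as before: for $(x,p,0)\in I_1$ pick $q\in[\R^n]_\eta$ with $\|x-q\|\le\eta$, so $V_p(x,q)\le\overline{\alpha}(\eta)\le\underline{\alpha}(\varepsilon)=\sigma_0$; conversely $((q,p,0),(q,p,0))\in R$ trivially.

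The main obstacle is verifying the switching inequality in the third transition case. Expanding the closed form $\sigma_{N-1}=e^{-\kappa(N-1)\tau_s}\underline{\alpha}(\varepsilon)+\gamma(\eta)\,\frac{1-e^{-\kappa(N-1)\tau_s}}{1-e^{-\kappa\tau_s}}$ obtained from the recurrence and substituting into $\mu(e^{-\kappa\tau_s}\sigma_{N-1}+\gamma(\eta))\le\underline{\alpha}(\varepsilon)$, the exponentials telescope via $\tau_d=N\tau_s$ to the clean inequality $\mu\gamma(\eta)\,\frac{1-e^{-\kappa\tau_d}}{1-e^{-\kappa\tau_s}}\le(1-\mu e^{-\kappa\tau_d})\,\underline{\alpha}(\varepsilon)$, which is precisely (\ref{eq:simconda}). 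The dwell-time hypothesis $\tau_d>\log\mu/\kappa$ is exactly what keeps the coefficient $1-\mu e^{-\kappa\tau_d}$ positive and hence the constraint on $\eta$ solvable by some strictly positive value, so the construction succeeds.
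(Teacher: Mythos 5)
Your proposal is correct and takes essentially the same approach as the paper's proof: your thresholds $\sigma_i$ are exactly the paper's $\delta_i$ (same recursion $\delta_{i+1}=e^{-\kappa\tau_s}\delta_i+\gamma(\eta)$, $\delta_0=\underline{\alpha}(\varepsilon)$, same monotonicity observation and three-case transition analysis), and your key switching inequality $\mu\left(e^{-\kappa\tau_s}\sigma_{N-1}+\gamma(\eta)\right)\le\underline{\alpha}(\varepsilon)$ is precisely the paper's bound $\delta_N\le\underline{\alpha}(\varepsilon)/\mu$, derived there too from the closed form and condition (\ref{eq:simconda}). The initial-state correspondence and the symmetric matching clause also coincide with the published argument.
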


\begin{proof} Let us define the relation $R \subseteq Q_1 \times Q_2$ by
$$
R= \left\{(x,p_1,i_1,q,p_2,i_2) \in Q_1 \times Q_2 |\; p_1=p_2=p,\; i_1=i_2=i, V_p(x,q) \le \delta_i\} \right.
$$
where $\delta_0,\dots,\delta_{N}$ are given recursively by
$$
\delta_0=\underline{\alpha}({\varepsilon}), \; \delta_{i+1} = e^{-\kappa \tau_s} \delta_i+ \gamma(\eta).
$$
We can easily show that: 
\begin{equation}
\label{eq:3}
\delta_i =e^{-i\kappa \tau_s}\underline{\alpha}({\varepsilon}) + \gamma(\eta)\frac{1-e^{-i\kappa \tau_s}}{1-e^{-\kappa \tau_s}} = \frac{\gamma(\eta)}{1-e^{-\kappa \tau_s}} +  e^{-i\kappa \tau_s}\left(
\underline{\alpha}({\varepsilon})-\frac{\gamma(\eta)}{1-e^{-\kappa \tau_s}} \right)
\end{equation}
From equation (\ref{eq:simconda}) and since $\mu \ge 1$ and $\gamma$ is a $\mathcal K_\infty$ function, 
$\gamma(\eta) \le (1-e^{-\kappa \tau_s}) \underline{\alpha}({\varepsilon})$. It follows from (\ref{eq:3}) that
$\delta_0 \ge \delta_1 \ge \dots \ge \delta_{N-1} \ge \delta_N$. 
From equation (\ref{eq:simconda}), and since $\gamma$ is a $\mathcal K_\infty$ function  and $\tau_d=N\tau_s$,
$$
\delta_N = e^{-\kappa \tau_d} \underline{\alpha}({\varepsilon}) + \gamma(\eta) \frac{1-e^{-\kappa \tau_d}}{1-e^{-\kappa \tau_s}} \le e^{-\kappa \tau_d} \underline{\alpha}({\varepsilon})+ \left(\frac{1}{\mu} -e^{-\kappa \tau_d}\right) \underline{\alpha}({\varepsilon})  =\frac{ \underline{\alpha}({\varepsilon})}{\mu}.
$$
We can now prove that $R$ is an $\varepsilon$-approximate bisimulation relation between $T_{\tau_s}(\Sigma_{\tau_d})$ and
$T_{\tau_s,\eta}(\Sigma_{\tau_d})$. Let $(x,p,i,q,p,i)\in R$,
then 
\begin{eqnarray*}
\|H_1(x,p,i)-H_2(q,p,i)\| &=& \|x-q\| \le \underline{\alpha}^{-1} \left( V_p(x,q) \right)\\ &\le& 
 \underline{\alpha}^{-1}(\delta_i) \le  \underline{\alpha}^{-1}(\delta_0) = \varepsilon.
\end{eqnarray*}
Hence, the first condition of Definition~\ref{def:bisim} holds. Let us prove that the second condition holds as well.
Let $(x,p,i)\rTo_1^p (x',p',i')$, then $x'= \mathbf{x}(\tau_s,x,p)$. There exists a transition $(q,p,i)\rTo_2^p (q',p',i')$ with $\|q'-\mathbf{x}(\tau_s,q,p)\| \le \eta$. 
From equation (\ref{eq:assum}) and since $V_p$ is a $\delta$-GAS Lyapunov function for subsystem $\Sigma_p$ we can show, similarly to equation (\ref{eq:4}), that
\begin{eqnarray}
\label{eq:5}
V_p(x',q') \le e^{-\kappa \tau_s} V_p(x,q) +\gamma(\eta)  \le e^{-\kappa \tau_s} \delta_i +\gamma(\eta) = \delta_{i+1}.
\end{eqnarray}
We now examine three separate cases:
\begin{itemize}
\item If $i<N-1$, then $p'=p$ and $i'=i+1$; since $V_p(x',q') \le \delta_{i+1}$, it follows that $(x',p,i+1,q',p,i+1) \in R$.
\item If $i=N-1$ and $p'=p$, then $i'=N-1$; from (\ref{eq:5}), $V_p(x',q') \le \delta_{N} \le \delta_{N-1}$, it follows that $(x',p,N-1,q',p,N-1) \in R$.
\item If $i=N-1$ and $p'\ne p$, then $i'=0$; from (\ref{eq:5}), $V_p(x',q') \le \delta_{N} \le \delta_{0}/\mu$. From equation (\ref{eq:assum}), it follows that $V_{p'}(x',q') \le \mu V_p(x',q') \le \delta_0$. Therefore,
$(x',p',0,q',p',0) \in R$.
\end{itemize}
Similarly, we can show that for any transition $(q,p,i)\rTo_2^p (q',p',i')$, there exists a transition $(x,p,i)\rTo_1^p (x',p',i')$ such that $(x',p',i',q',p',i')\in R$. Hence, $R$ is an $\varepsilon$-approximate bisimulation relation.

For all initial states $(x,p,0)\in I_1$, there exists $(q,p,0) \in I_2$ such that $\|x-q\|\le \eta$. 
Then, $V_p(x,q) \le \overline{\alpha} (\eta) \le \underline{\alpha}(\varepsilon)$
because of equation~(\ref{eq:simconda}) and $\overline{\alpha}$ is $\mathcal K_\infty$ function. Hence, $V_p(x,q)  \le \delta_0$ and $(x,p,0,q,p,0)\in R$.
Conversely, for all $(q,p,0) \in I_2$, $(x,p,0)=(q,p,0)\in I_1$. Then, $V_p(x,q)=0 \le \delta_0$
and $(x,p,0,q,p,0)\in R$. Thus, $T_{\tau_s}(\Sigma_{\tau_d})$ and
$T_{\tau_s,\eta}(\Sigma_{\tau_d})$ are approximately bisimilar with precision
$\varepsilon$. 
\end{proof}

Provided that $\tau_d > \frac{\log\mu}{\kappa}$, for a given time sampling parameter and a desired precision, there always exists $\eta \in \R^+$ sufficiently small such that equation (\ref{eq:simconda}) holds.
Thus, if the dwell time is large enough, we can compute symbolic models of arbitrary precision of the switched system.
Let us remark that the lower bound we obtain on the dwell time is the same than the one in Theorem~\ref{th:stab2} ensuring incremental stability of the switched system. Also, Theorem~\ref{th:symb} can be seen as a corollary of Theorem~\ref{th:symb2}. Indeed, existence of a common $\delta$-GAS Lyapunov function is equivalent to equation (\ref{eq:reset}) with $\mu=1$. Then, no constraint is necessary on the dwell time and equation (\ref{eq:simconda}) becomes equivalent to (\ref{eq:simcond}). 

The previous Theorems also give indications on the practical computation of these
symbolic models. The sets of states of $T_{\tau_s,\eta}(\Sigma)$ or
$T_{\tau_s,\eta}(\Sigma_{\tau_d})$ are countable but infinite. 
However, in practical control applications, we are usually interested in the dynamics of the switched system only on a compact
subset  $C \subseteq \R^n$. Then, we can restrict the set
of states of $T_{\tau_s,\eta}(\Sigma)$ or $T_{\tau_s,\eta}(\Sigma_{\tau_d})$ 
to the sets $[\R^n]_{\mu} \cap  C$ or $([\R^n]_{\mu} \cap  C )\times P \times \{0,\dots,N-1\}$ which are finite.
The computation of the transition relations is then relatively simple since
it mainly involves the numerical computation of the points $\mathbf{x}(\tau_s,q,p)$ with $q \in [\R^n]_{\mu} \cap  C$
and $p \in P$. This can be done by simulation of 
the subsystems $\Sigma_1,\dots,\Sigma_m$. Numerical errors in the computation of these points can be taken
into account: it is sufficient to replace $\eta$ by $\eta + e$, where $e$ is an evaluation of the error, in Theorems~\ref{th:symb} and \ref{th:symb2}.

Finally, we would like to discuss the assumption made in equation (\ref{eq:assum}). This assumption may look quite strong
because the inequality has to hold for any triple in $\R^n$, and the function $\gamma_p$ must be independent of $x$. However, if we are interested in the dynamics of the switched system on the compact
subset  $C \subseteq \R^n$, we only need this assumption to hold for all $x,y,z\in C$. 
Then, it is sufficient to assume that $V_p$ is $\mathcal C^1$ on $ C$. Indeed, for all $x,y,z \in C$,
$$
|V_p(x,y)-V_p(x,z) | \le \left( \max_{x,y \in C} \left\|\frac{\partial V_p}{\partial y}(x,y) \right\| \right) \|y-z\| =\gamma_p(\|y-z\|).
$$
In this case, equation (\ref{eq:assum}) holds. This means that the existence of approximately bisimilar symbolic models on an arbitrary compact subset of $\R^n$ does not need more assumptions than existence of common or multiple Lyapunov functions
ensuring incremental stability of the switched system.

\section{Examples of symbolic control design}

In this section, we show the effectiveness of our approach on two examples illustrating the main results of the paper.

\subsection{Common Lyapunov functions: the boost DC-DC converter}

We first use our methodology to compute symbolic models of a concrete switched system: 
the boost DC-DC converter (see Figure~\ref{fig:dc}). This is an example of electrical power convertor
that has been studied from the point of view of hybrid control in~\cite{senesky2003,beccuti2005,buisson2005,beccuti2006}.
\begin{figure}[!h]
\begin{center}
\input{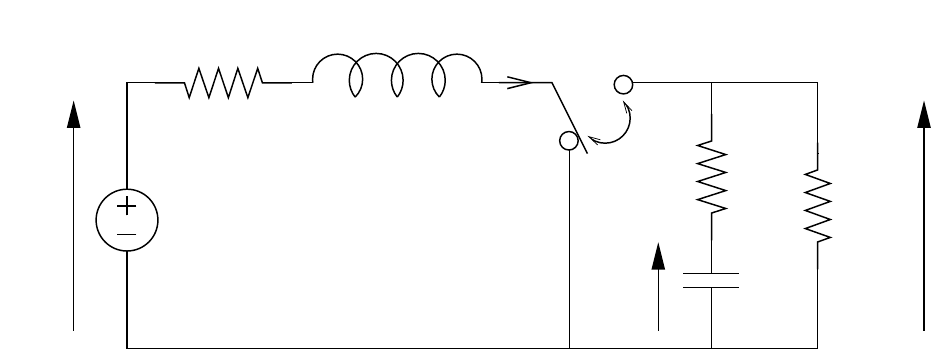_t}
\caption{boost DC-DC converter.}
\label{fig:dc}
\end{center}
\end{figure}

\noindent
The boost converter has two operation modes depending on the position of the switch. 
The state of the system is $x(t)=[i_l(t) \; v_c(t)]^T$ where $i_l(t)$ is the inductor current and $v_c(t)$ the capacitor
voltage.
The dynamics associated with both modes are affine of the form
$
\dot x(t)=A_p x(t) +b$ ($p=1,2$)
with 
$$
A_1=\left[ 
\begin{smallmatrix}
-\frac{r_l}{x_l} & 0 \\
0  & -\frac{1}{x_c}\frac{1}{r_0+r_c}
\end{smallmatrix}
\right],
\;
A_2=\left[ 
\begin{smallmatrix}
-\frac{1}{x_l}(r_l+\frac{r_0r_c}{r_0+r_c}) & -\frac{1}{x_l}\frac{r_0}{r_0+r_c} \\
\frac{1}{x_c}\frac{r_0}{r_0+r_c}  & -\frac{1}{x_c}\frac{1}{r_0+r_c}
\end{smallmatrix}
\right],
\; b=\left[ 
\begin{smallmatrix}
\frac{v_s}{x_l} \\
0
\end{smallmatrix}
\right].
$$
It is clear that the boost DC-DC converter is an example of a switched system. In the following,
we use the numerical values from~\cite{beccuti2005}, that is, in the per unit system, $x_c=70$ p.u.,
$x_l=3$ p.u., $r_c=0.005$ p.u., $r_l=0.05$ p.u., $r_0=1$ p.u. and $v_s=1$ p.u.. The goal of the boost DC-DC converter
is to regulate the output voltage across the load $r_0$. This control problem is usually reformulated 
as a current reference scheme. Then, the goal is to keep the inductor current $i_l(t)$ around a reference value $i_l^\text{ref}$. This can be done, for instance, by synthesizing a controller that keeps the state of the switched system in an invariant set $\mathcal I$ centered around the reference value.

It can be shown by solving a set of 2 linear matrix inequalities
that the subsystems associated with the two operation modes are both incrementally stable and that they
share a common $\delta$-GAS Lyapunov function of the form 
$
V(x,y) = \sqrt{(x-y)^T M (x-y)},
$
where  $M$ is positive definite symmetric. Thus, the switched system is $\delta$-GAS however it is not GAS
because the subsystems do not share a common equilibrium point.

The matrix $M$ can be computed using semi-definite programming; 
for a better numerical conditioning, we rescaled the second 
variable of the system (i.e.~the state of the system becomes $x(t)=[i_l(t) \; 5 v_c(t)]^T$; the matrices $A_1$, $A_2$
and vector $b$ are modified accordingly). We obtained 
$$
M= \left[ 
\begin{smallmatrix}
1.0224  &  0.0084 \\
0.0084  &  1.0031
\end{smallmatrix}
\right].
$$
The corresponding $\delta$-GAS Lyapunov function has the following characteristics:
$\underline{\alpha}(s)=s$, $\overline{\alpha}(s)=1.0127 s$, $\kappa = 0.014$. Let us remark that equation (\ref{eq:assum})
holds on the entire state-space with $\gamma(s)=1.0127 s$. 
We set the sampling period to $\tau_s=0.5$. 
Then, a symbolic model can be computed for the boost DC-DC converter using the procedure described in Section 4.1.
According to Theorem~\ref{th:symb}, a desired precision $\varepsilon$ can be achieved by choosing a state space discretization parameter $\eta$ satisfying $\eta \le \varepsilon/ 145$. In this example, the ratio between the precision of the symbolic approximation and the 
state space discretization parameter is quite large. This is explained by the fact that the subsystems are quite weakly stable since the value of $\kappa$ is small.

\begin{figure}[!h]
\vspace{-3cm}
\begin{center}
\includegraphics[angle=0,scale=0.4]{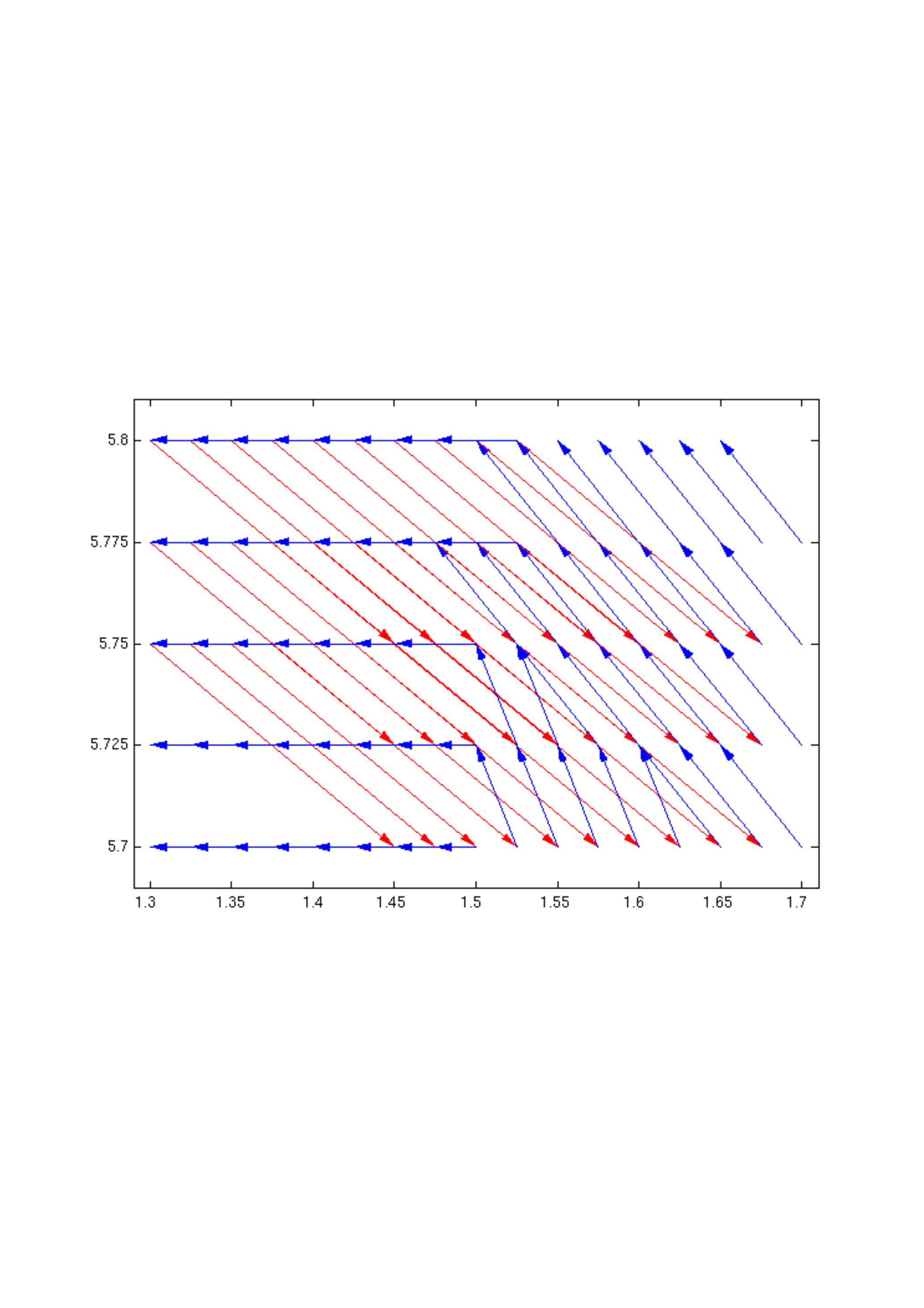}
\hspace{-1cm}
\includegraphics[angle=0,scale=0.4]{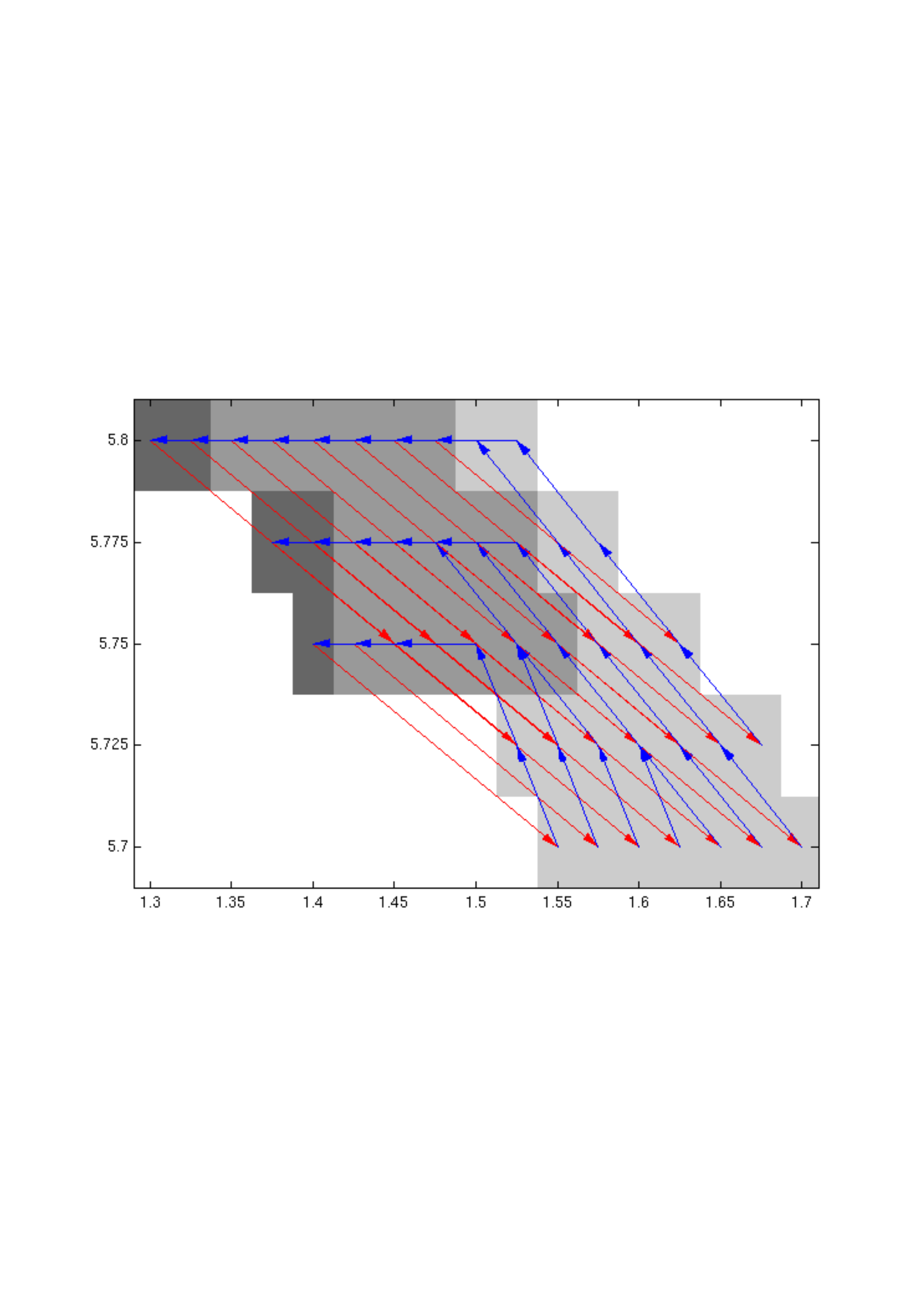}
\vspace{-3.2cm}
\caption{Symbolic model of the boost DC-DC converter for $\eta=\frac{1}{40\sqrt{2}}$ (left); Controller for the symbolic model (right) (dark gray: mode 1, light gray: mode 2, medium gray: both modes are acceptable, white: uncontrollable states).}
\label{fig:ex1}
\end{center}
\end{figure}

We consider two different values of the precision parameter $\varepsilon$. 
We first choose a precision $\varepsilon=2.6$ which can be achieved by choosing $\eta=\frac{1}{40\sqrt{2}}$.
This precision is quite poor and makes the computed symbolic model of no practical use. However, it helps to understand the second experiment decribed further. On Figure~\ref{fig:ex1}, 
the symbolic model of the boost DC-DC converter is shown on the left, red and blue arrows represent the transitions associated with mode 1 and 2, respectively. We only represented the transitions that keep the state of the symbolic model
in the set $\mathcal I'= [1.3, 1.7]\times [5.7,5.8]$. 
Using supervisory control~\cite{ramadge1987}, we synthesized the most permissive controller that keeps the state of the symbolic model inside $\mathcal I'$. It is shown on the right figure, the color of the boxes centered around the states of the symbolic model gives the corresponding action of the controller: dark and light gray means that for these states of the symbolic model
the controller has to use mode 1 and 2, respectively; medium gray means that for these states 
the controller can use either mode 1 or mode 2; white means that from these states there does not exist any switching sequence that keeps the state of the symbolic model in $\mathcal I'$, i.e. these states are uncontrollable. From this controller, using the approach presented 
in~\cite{tabuada2007}, one can derive a controller for the boost DC-DC converter that keeps the state of the switched system in 
$\mathcal I= [1.3-\varepsilon, 1.7+\varepsilon ]\times [5.7-\varepsilon,5.8+\varepsilon]$. Clearly, the chosen precision is too large to make this controller useful from a practical point of view.

The second value we consider for the precision parameter is $\varepsilon=0.026$. This precision can be achieved by 
choosing $\eta=\frac{1}{4000\sqrt{2}}$. We do not show the symbolic model as it has too many states ($642001$)
to be represented graphically.
We repeat the same experiment with this model, the most permissive controller that keeps the state of the symbolic model in $\mathcal I'$ is shown in Figure~\ref{fig:ex2}, on the left. The computation of the symbolic model and the synthesis of the supervisory controller, implemented in MATLAB, takes overall less than 60 seconds. From the controller of the symbolic model, we derive a controller for the boost DC-DC converter that keeps the state of the switched system in 
$\mathcal I= [1.3-\varepsilon, 1.7+\varepsilon ]\times [5.7-\varepsilon,5.8+\varepsilon]$. We apply a lazy control strategy, when the controller can choose both modes 1 and 2, it just keeps the current operation mode unchanged.
A state trajectory of the controlled boost DC-DC converter is shown in Figure~\ref{fig:ex2}, on the right. 
We can see that the trajectory remains in the invariant set.

\begin{figure}[!h]
\vspace{-3cm}
\begin{center}
\includegraphics[angle=0,scale=0.4]{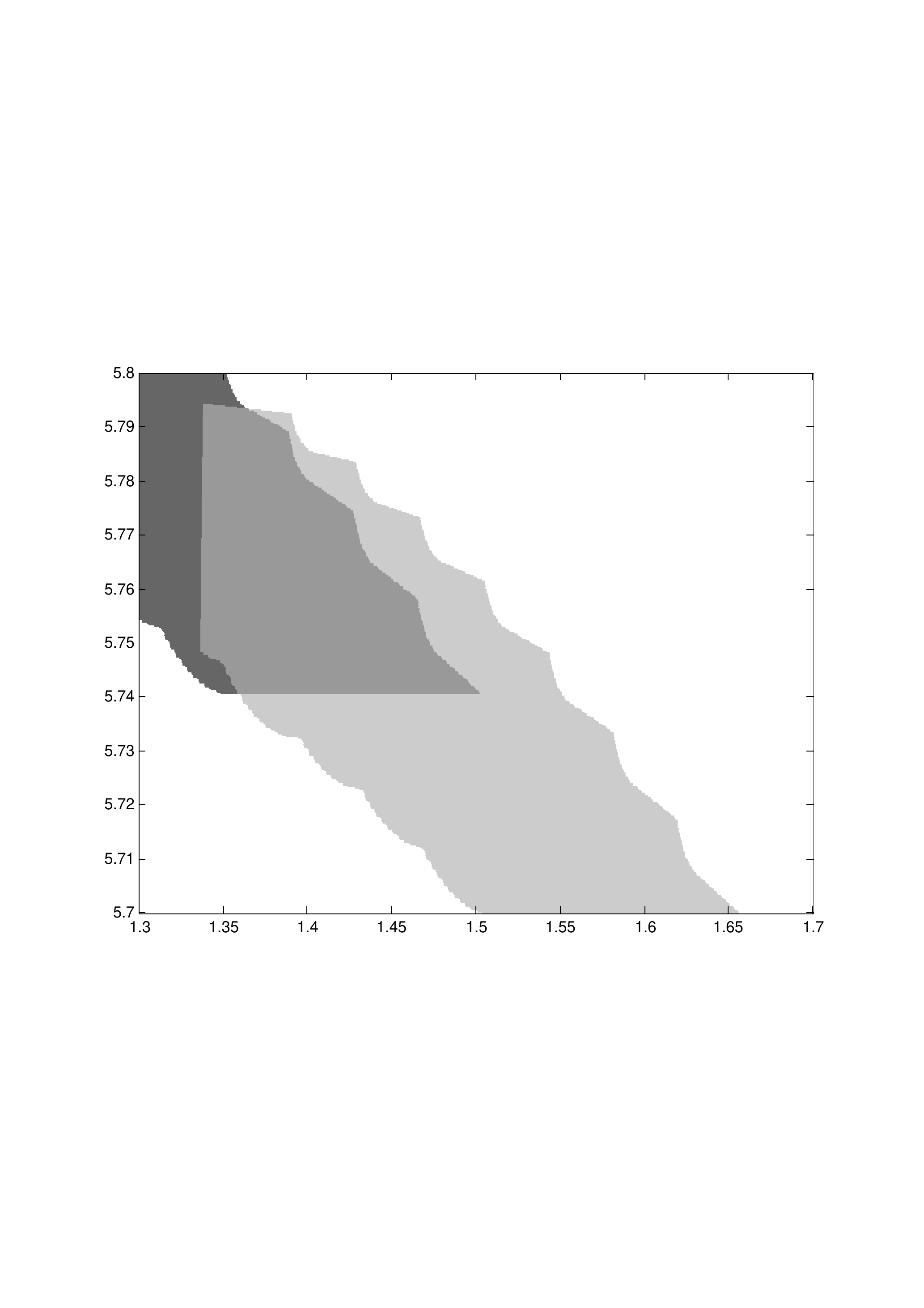}
\hspace{-1cm}
\includegraphics[angle=0,scale=0.4]{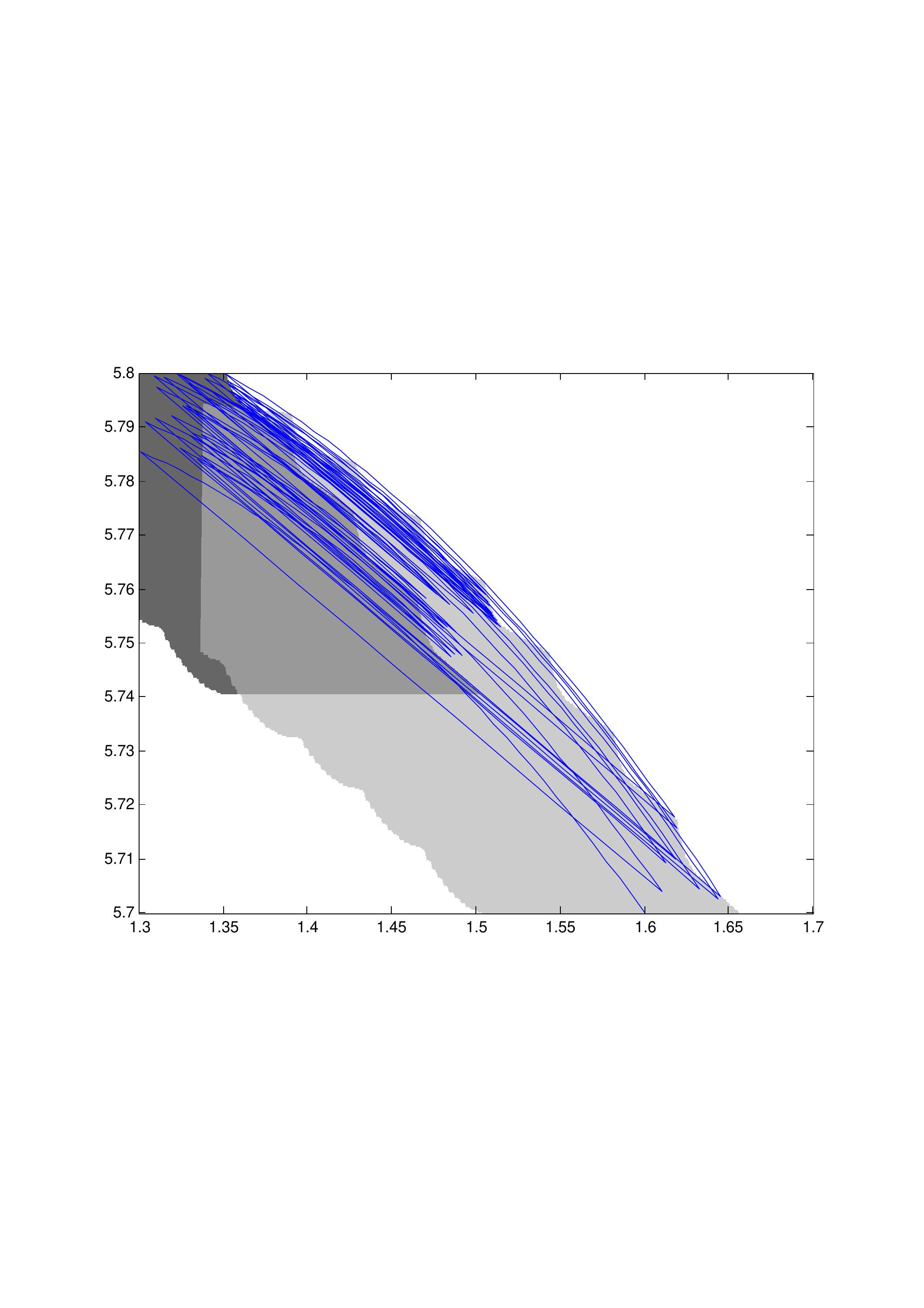}
\vspace{-3.2cm}
\caption{Controller for a symbolic model of the boost DC-DC converter for $\eta=\frac{1}{4000\sqrt{2}}$ (left)
(dark gray: mode 1, light gray: mode 2, medium gray: both modes are acceptable, white: uncontrollable states); Trajectory of the boost DC-DC converter using the previous controller (right).}
\label{fig:ex2}
\end{center}
\end{figure}

\subsection{Multiple Lyapunov functions} We now consider a second example inspired by a well known switched system with stable subsystems and exhibiting unstable behaviors (see e.g.~\cite{liberzon2003}). The system has two modes and the state space is $\R^2$. The dynamics associated with both modes are affine of the form 
$
\dot x(t)=A_p x(t) +b_p$ ($p=1,2$)
with 
$$
A_1=\left[ 
\begin{smallmatrix}
-0.25 & 1 \\
-2  & -0.25
\end{smallmatrix}
\right],
\;
A_2=\left[ 
\begin{smallmatrix}
-0.25 & 2 \\
-1  & -0.25
\end{smallmatrix}
\right],
\; b_1=\left[ 
\begin{smallmatrix}
-0.25 \\
-2
\end{smallmatrix}
\right],\;
b_2=\left[ 
\begin{smallmatrix}
0.25 \\
1
\end{smallmatrix}
\right].
$$
We consider a control design problem with a safety specification: the goal is to keep the trajectories of the switched
system within a specified region of the state-space, denoted $\mathcal I$, while avoiding a specified subset of unsafe states $\mathcal U \subseteq \mathcal I$. We assume that $\mathcal U$ contains the equilibrium points of both systems and therefore the specification cannot be met by neither $\Sigma_1$ nor $\Sigma_2$.

The system does not have a common $\delta$-GAS Lyapunov function because it exhibits unstable behaviors for some
switching signals (e.g. apply periodically mode $1$ during $1$ time unit, then mode $2$ during $1$ time unit and so on). However,
each subsystem has a $\delta$-GAS Lyapunov function of the form $
V_p(x,y) = \sqrt{(x-y)^T M_p (x-y)},$ with
$$
M_1=\left[ 
\begin{smallmatrix}
2 & 0 \\
0  & 1
\end{smallmatrix}
\right],
\;
M_2=\left[ 
\begin{smallmatrix}
1 & 0 \\
0  & 2
\end{smallmatrix}
\right].
$$
The corresponding $\delta$-GAS Lyapunov functions have the following characteristics:
$\underline{\alpha}(s)=s$, $\overline{\alpha}(s)=\sqrt{2} s$, $\kappa = 0.25$.
Moreover, the assumptions of Theorem~\ref{th:stab2} hold with $\mu=\sqrt{2}$, and a dwell-time $\tau_d=2 > \frac{\log(\mu)}{\kappa}$. Here, again, the switched system is $\delta$-GAS however it is not GAS
because the subsystems do not share a common equilibrium point. Also, equation (\ref{eq:assum})
holds on the entire state-space with $\gamma(s)=\sqrt{2} s$. 
We set the sampling period to $\tau_s=0.5$. 
Then, a symbolic model can be computed using the procedure described in Section 4.2.
According to Theorem~\ref{th:symb2}, a desired precision $\varepsilon$ can be achieved by choosing a state space discretization parameter $\eta$ satisfying $\eta \le \varepsilon/ 48$. 

We choose $\eta=\frac{1}{100\sqrt{2}}$, corresponding to a precision $\varepsilon = 0.34$. 
Then, we used supervisory control to design the most permissive controller that keeps the state of the symbolic model 
within the set $\mathcal I'= [-6, 6]\times [-4,4]$ while avoiding $\mathcal U' = [-1.5, 1.5]\times [-1,1]$.
Though our symbolic model has $7696008$ states, the overall computation, including the determination of the symbolic 
model and the synthesis of the controller, takes only about $130$ seconds.

The controller is shown on Figure~\ref{fig:ex3}. On the left, respectively on the right, we represented the possible control actions when the current mode is $1$, respectively $2$, and the dwell time has elapsed (i.e. switching is enabled). 
Dark and light gray means that for these states of the symbolic model
the controller has to use mode 1 and 2, respectively; medium gray means that for these states 
the controller can use either mode 1 or mode 2; white means that these states are uncontrollable and the specification
cannot be met from these states.

\begin{figure}[!h]
\vspace{-3cm}
\begin{center}
\includegraphics[angle=0,scale=0.4]{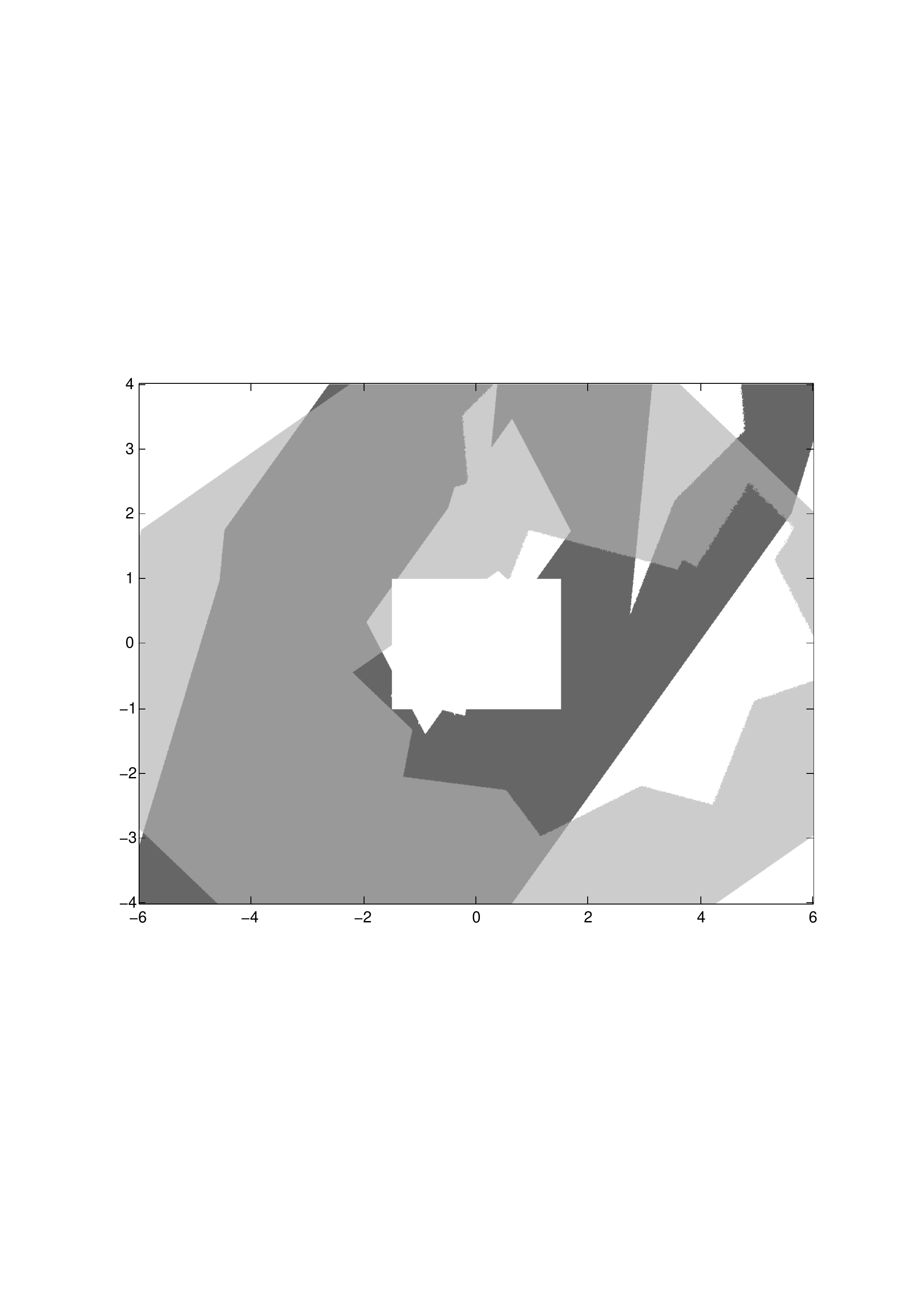}
\hspace{-1cm}
\includegraphics[angle=0,scale=0.4]{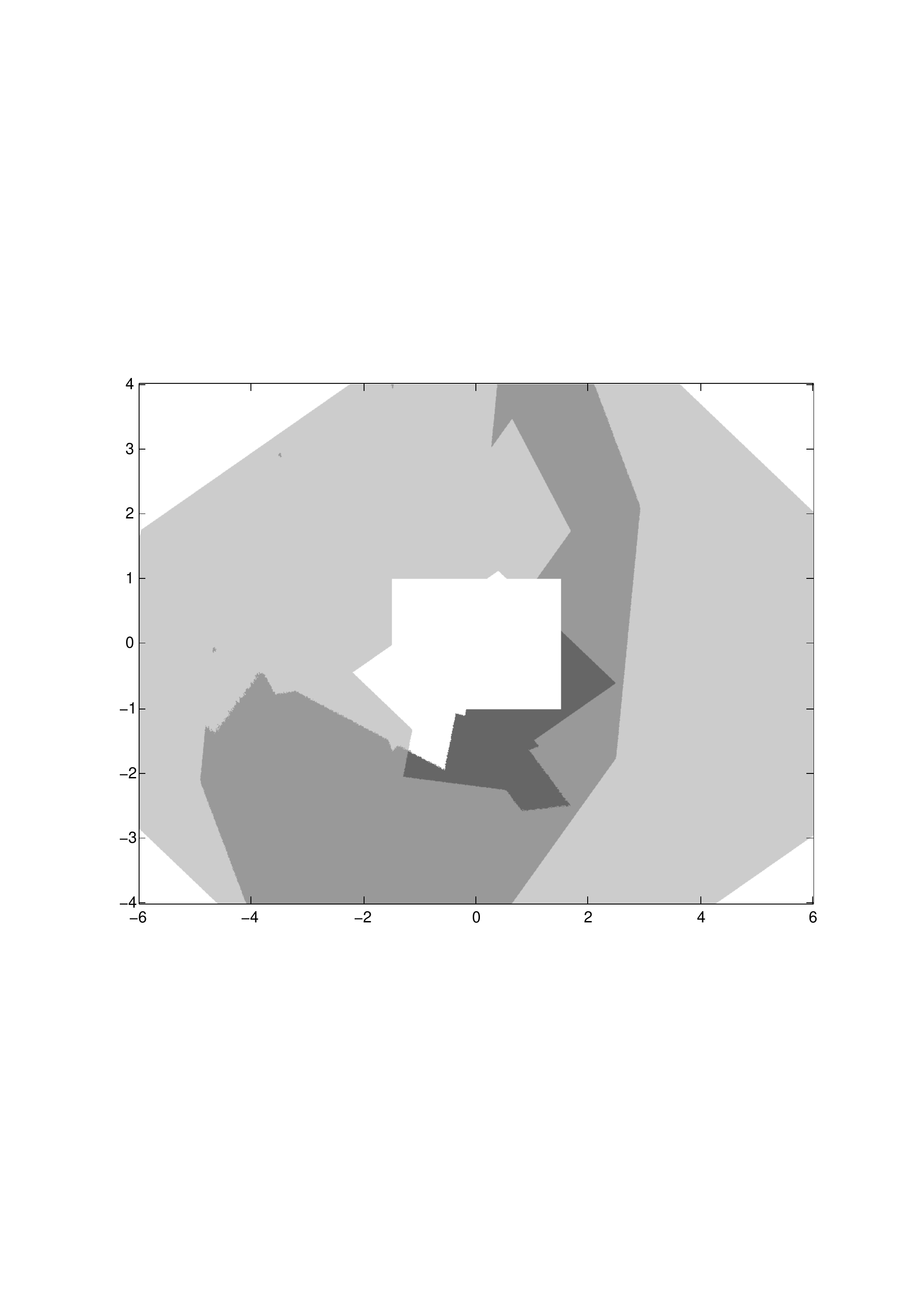}
\vspace{-3.2cm}
\caption{Controller for the symbolic model. Possible control actions when the current mode is $1$, respectively $2$, and the
dwell time has elapsed (left, respectively right) (dark gray: mode 1, light gray: mode 2, medium gray: both modes are acceptable, white: uncontrollable states).
\label{fig:ex3}}
\end{center}
\end{figure}

From the most permissive controller represented on Figure~\ref{fig:ex3}, we designed a lazy controller for the symbolic model. Unlike the most permissive controller, the lazy control strategy can be implemented regardless of the current mode
and of the time elapsed since the latest switching. The controller is represented
on Figure~\ref{fig:ex4}, on the left:  
dark and light gray means that for these states of the symbolic model
the controller has to use mode 1 and 2, respectively; medium gray means that for these states 
the controller must keep the current mode unchanged; white means that these states are uncontrollable.
Let us remark that by design, this controller satisfies the dwell time constraint though it does not appear explicitely in the controller description. 
Using the approach presented 
in~\cite{tabuada2007}, one can derive a controller for the switched system, that keeps the state of the switched system 
within the set 
$\mathcal I= [-6-\varepsilon, 6+\varepsilon]\times [-4-\varepsilon,4+\varepsilon]$ while avoiding $\mathcal U = [-1.5+\varepsilon, 1.5-\varepsilon]\times [-1+\varepsilon,1-\varepsilon]$.
On Figure~\ref{fig:ex4}, in the center, we represented an example of switching signals generated by the controller and the corresponding evolution of the state variables. We can check that the switching signal indeed has dwell time $2$.
On the right, we represented the associated trajectory of the switched system, satifying the safety property.

\begin{figure}[!h]
\vspace{-2cm}
\begin{center}
\includegraphics[angle=0,scale=0.29]{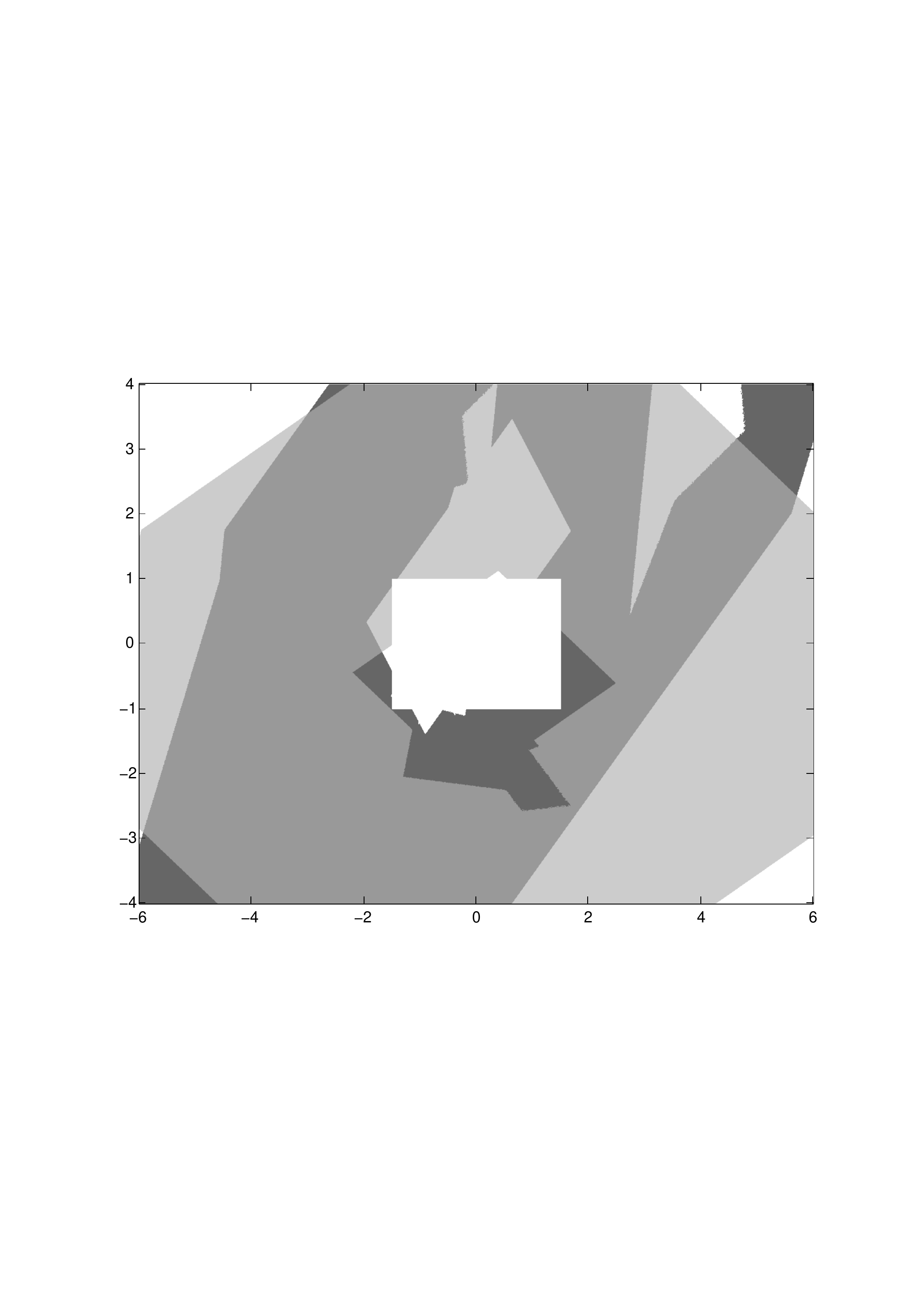}
\hspace{-1cm}
\includegraphics[angle=0,scale=0.29]{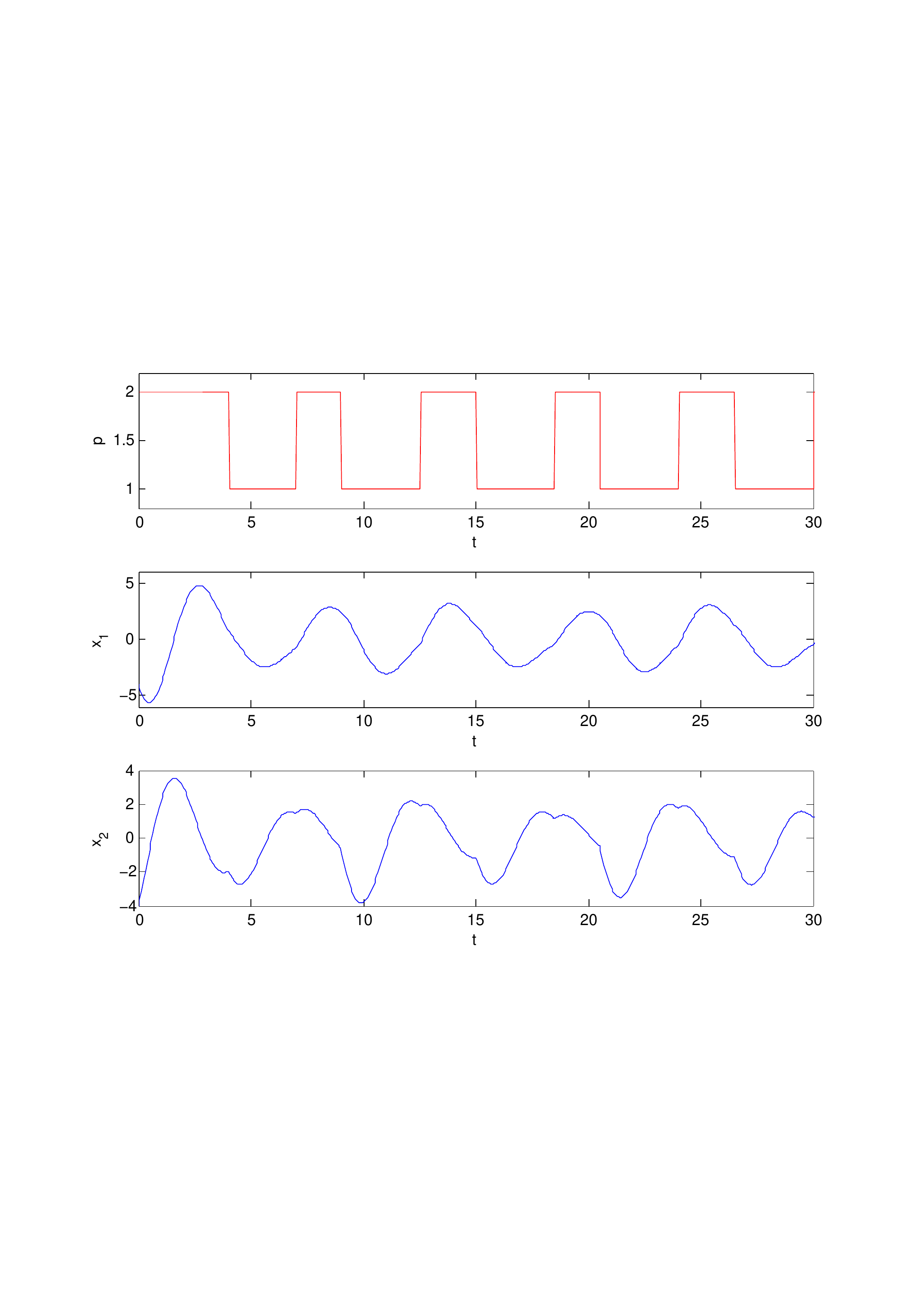}
\hspace{-1cm}
\includegraphics[angle=0,scale=0.29]{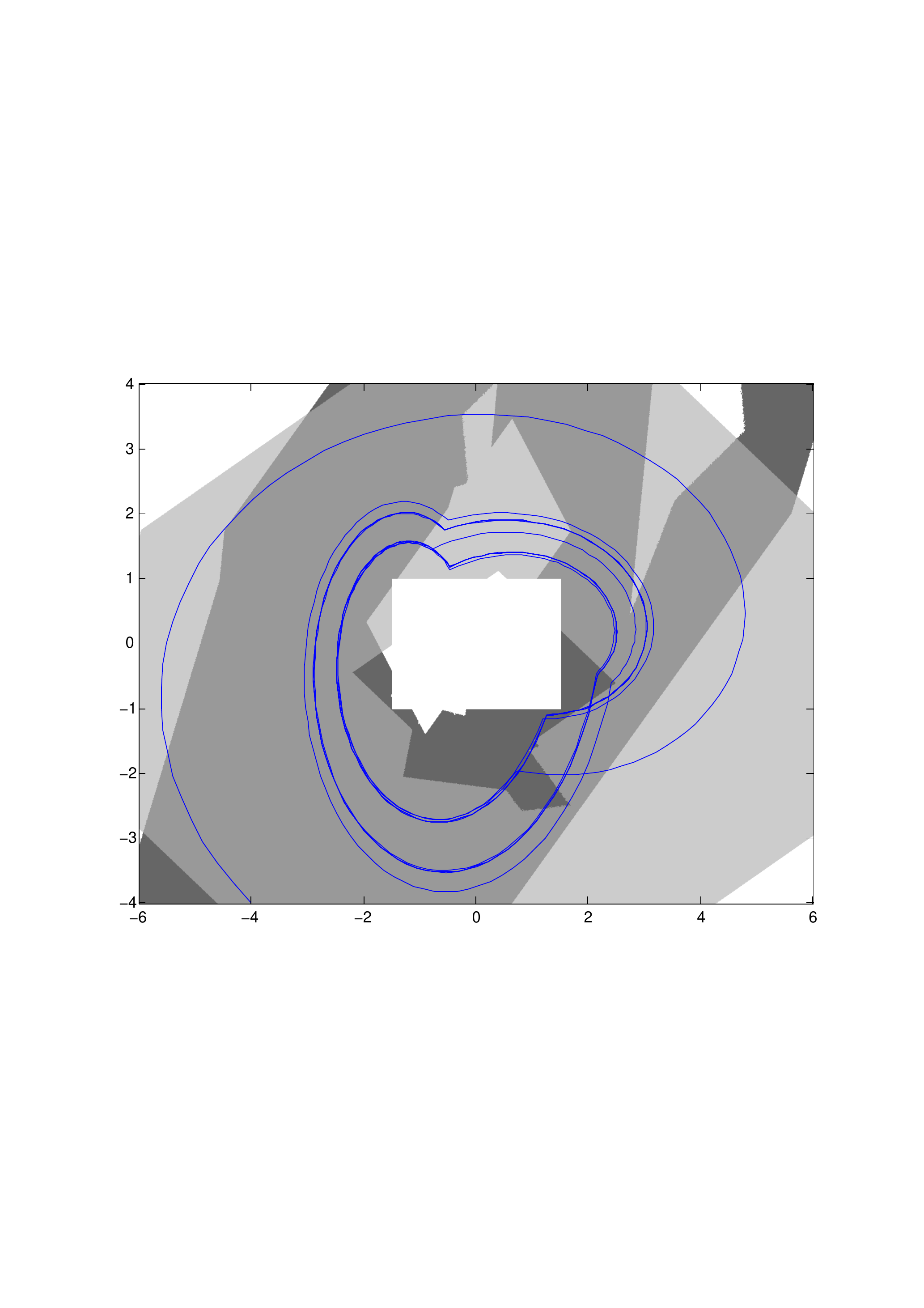}
\vspace{-3cm}
\caption{Lazy controller for the symbolic model: (left) (dark gray: mode 1, light gray: mode 2, medium gray: keep current mode unchanged, white: uncontrollable states); Switching signal generated by the lazy controller and corresponding evolution of the state variables, switching signal has dwell time $2$ (center); Associated trajectory of the switched system (right).}
\label{fig:ex4}
\end{center}
\end{figure}

\section{Conclusion}

In this paper, we showed, under assumptions ensuring incremental stability, such as existence of a common $\delta$-GAS Lyapunov function or multiple $\delta$-GAS Lyapunov functions with dwell time, the existence of approximately bisimilar symbolic abstractions for switched systems. The proof of existence is constructive: these
abstractions are effectively computable and any precision can be achieved. Two non-trivial examples of controller design based on symbolic models of switched systems have been shown.

The authors are currently improving the presented results in two different directions. The controllers resulting from arbitrary specifications may require switching surfaces with complex geometries. This increases the space complexity of controllers and complicates its real-time implementation. To address this difficulty, the authors are currently investigating the synthesis of more conservative controllers that are guaranteed to have lower complexity switching regions. The other direction being investigated is the most efficient enforcement of the dwell time requirement. Instead of building this requirement in the symbolic model, which results in larger symbolic models, it is possible to incorporate this requirement as part of the overall specification. We can thus synthesize controllers based on smaller symbolic models while meeting all the dwell time requirements.


\bibliographystyle{alpha}
\bibliography{ref}

\begin{thebibliography}{AVW03}

\bibitem[Ang02]{angeli2002}
D.~Angeli.
\newblock A {Lyapunov} approach to incremental stability properties.
\newblock {\em IEEE Trans. on Automatic Control}, 47(3):410--421, March 2002.

\bibitem[AS99]{angeli1999}
D.~Angeli and E.D. Sontag.
\newblock Forward completeness, unboundedness observability, and their
  {Lyapunov} characterizations.
\newblock {\em Systems and Control Letters}, 38(3):209--217, 1999.

\bibitem[AVW03]{arnold2003}
A.~Arnold, A.~Vincent, and I.~Walukiewicz.
\newblock Games for synthesis of controllers with partial observation.
\newblock {\em Theoretical Computer Science}, 28(1):7--34, 2003.

\bibitem[BPM05]{beccuti2005}
A.G. Beccuti, G.~Papafotiou, and M.~Morari.
\newblock Optimal control of the boost dc-dc converter.
\newblock In {\em IEEE Conf. on Decision and Control}, pages 4457--4462, 2005.

\bibitem[BPM06]{beccuti2006}
A.G. Beccuti, G.~Papafotiou, and M.~Morari.
\newblock Explicit model predictive control of the boost dc-dc converter.
\newblock In {\em Analysis and Design of Hybrid Systems}, pages 315--320, 2006.

\bibitem[BRC05]{buisson2005}
J.~Buisson, P.Y. Richard, and H.~Cormerais.
\newblock On the stabilisation of switching electrical power converters.
\newblock In {\em Hybrid Systems: Computation and Control}, volume 3414 of {\em
  LNCS}, pages 184--197. Springer, 2005.

\bibitem[GP07]{girard2007}
A.~Girard and G.J. Pappas.
\newblock Approximation metrics for discrete and continuous systems.
\newblock {\em IEEE Trans. on Automatic Control}, 52(5):782--798, 2007.

\bibitem[GPT08]{girard2008}
A.~Girard, G.~Pola, and P.~Tabuada.
\newblock Approximately bisimilar symbolic models for incrementally stable
  switched systems.
\newblock In {\em Hybrid Systems: Computation and Control}, volume 4981 of {\em
  LNCS}, pages 201--214. Springer, 2008.

\bibitem[Kha96]{khalil1996}
H.K. Khalil.
\newblock {\em Nonlinear Systems}.
\newblock Prentice Hall, 1996.

\bibitem[Lib03]{liberzon2003}
D.~Liberzon.
\newblock {\em Switching in Systems and Control}.
\newblock Birkhauser, 2003.

\bibitem[Mil89]{milner1989}
R.~Milner.
\newblock {\em Communication and Concurrency}.
\newblock Prentice Hall, 1989.

\bibitem[Par81]{park1981}
D.M.R. Park.
\newblock Concurrency and automata on infinite sequences.
\newblock In {\em GI-Conf. on Theoretical Computer Science}, volume 104 of {\em
  LNCS}, pages 167--183. Springer, 1981.

\bibitem[PGT07]{pola2007}
G.~Pola, A.~Girard, and P.~Tabuada.
\newblock Approximately bisimilar symbolic models for nonlinear control
  systems.
\newblock In {\em IEEE Conf. on Decision and Control}, 2007.

\bibitem[PW96]{praly1996}
L.~Praly and Y.~Wang.
\newblock Stabilization in spite of matched unmodeled dynamics and an
  equivalent definition of input-to-state stability.
\newblock {\em Math. Control Signals Systems}, 9:1--33, 1996.

\bibitem[RW87]{ramadge1987}
P.J. Ramadge and W.M. Wonham.
\newblock Supervisory control of a class of discrete event systems.
\newblock {\em SIAM Journal on Control and Optimization}, 25(1):206--230, 1987.

\bibitem[SEK03]{senesky2003}
M.~Senesky, G.~Eirea, and T.J. Koo.
\newblock Hybrid modelling and control of power electronics.
\newblock In {\em Hybrid Systems: Computation and Control}, volume 2623 of {\em
  LNCS}, pages 450--465. Springer, 2003.

\bibitem[Tab06]{SymbolicControl06}
P.~Tabuada.
\newblock Symbolic control of linear systems based on symbolic subsystems.
\newblock {\em IEEE Trans. on Automatic Control}, 51(6):1003--1013, June 2006.

\bibitem[Tab08]{tabuada2007}
P.~Tabuada.
\newblock An approximate simulation approach to symbolic control.
\newblock {\em IEEE Trans. on Automatic Control}, 2008.
\newblock To appear.

\end{thebibliography}

\end{document}